\newtheorem{thm}{Theorem}[section]
\newtheorem{prop}{Proposition}[section]
\newtheorem{cor}{Corollary}[section]
\newtheorem{lema}{Lemma}[section]
\theoremstyle{remark}
\newtheorem{obs}{Remark}[section]
\theoremstyle{remark}
\newtheorem{dfn}{Definition}[section]
\newtheorem{ntc}{Notation}[section]
\newtheorem{ejm}{Example}[section]
\newtheorem{clm}{Claim}
\numberwithin{equation}{section}
\let\c@lema\c@thm
\let\c@prop\c@thm
\let\c@conj\c@thm
\let\c@cor\c@thm
\let\c@obs\c@thm
\let\c@dfn\c@thm
\let\c@ntc\c@thm
\let\c@ejm\c@thm
\def\makeautorefname#1#2{\expandafter\def\csname#1autorefname\endcsname{#2}}
\newcommand\enet[1]{\renewcommand\theenumi{#1}
\renewcommand\labelenumi{\theenumi}}
\numberwithin{equation}{section}
\DeclareMathOperator{\spec}{Spec}
\DeclareMathOperator{\supp}{Supp}
\DeclareMathOperator{\ord}{ord}
\DeclareMathOperator{\np}{NP}
\DeclareMathOperator{\nr}{NR}
\DeclareMathOperator{\chr}{char}
\date{\today}
\title[Dicritical divisors, atypical fibres, special pencils and polynomials]{High-school algebra of the theory of dicritical divisors:
atypical fibres for special pencils and polynomials}
\author[E. Artal]{E. Artal Bartolo}
\address{IUMA\\
Departamento de Matem{\'a}ticas, Facultad de Ciencias, Universidad de Zaragoza,
c/ Pedro Cerbuna 12, 50009 Zaragoza, SPAIN}
\email{artal@unizar.es}
\author{I. Luengo}
\author[A. Melle]{A. Melle-Hern{\'a}ndez}
\address{ICMAT (CSIC-UAM-UC3M-UCM) \\
Departamento de {\'A}lgebra, 
Facultad de Ciencias Matem{\'a}ticas, Universidad Complutense, 28040 Madrid,  SPAIN}
\email{iluengo@mat.ucm.es,amelle@mat.ucm.es}
\thanks{}
\keywords{dicritical divisor, special pencil.}
\subjclass[2010]{14A05, 14R15.}
\begin{document}

\maketitle

\begin{quote}
\it\small In this work we got a revival of our discussions about dicriticals with Ram. Dedicated
to the memory of S.S.~Abhyankar.
\end{quote}

\begin{abstract}
In this work we deal with dicritical divisors, curvettes and polynomials.
These objects have been one of the main research interests of S.S. Abhyankar during
his last years. In this work we provide  
some elementary proofs of some S.S. Abhyankar and I.~Luengo results for dicriticals
in the framework of formal power series. Based on these ideas we give 
a constructive way to find the atypical fibres of a special pencil
and give bounds for its number, which are sharper than the existing ones.
Finally, we answer a question of J.~Gwo{\'z}dziewicz finding polynomials
that reach his bound.
\end{abstract}

\section*{Introduction}

The study of the topology and geometry of polynomial maps is of great interest in Affine Algebraic
Geometry, for instance for the cancellation problem or affine exotic spaces. 
The  Jacobian problem is one of the main open problems in this area. 
Recently the local theory of algebraic 
dicritical divisors and curvettes has been developed (\cite{AL1,aba:13,aba:14}) and  
applied to get some control on the fibers of a Jacobian pair. Dicritical divisors have been studied by S.S.~Abhyankar either alone, \cite{ab:10b,ab:11a,ab:11b,ab:12,ab:13a,ab:13b}, 
or with co-authors, \cite{aba:13,aba:14,AH1,AH2,AH3,AL1,AL2}. He has developed an algebraic
theory which starts from the geometric intuition coming from analytic geometry and extends
the result to the more general setting: starting from $\mathbb{C}\{x,y\}$ he developed
(with his collaborators) a general theory valid for general regular local rings.

In this work we want to apply this theory to the study of special pencils, i.e., 
elements of the quotient field of a regular ring whose denominator is a power
of a regular element of the ring. The fundamental  reason to study  these pencils
is that they appear naturally when working with polynomial maps at infinity.
Moreover, the strategy to study these pencils is through the  resolution of the base points of the pencil where dicriticals appear
in a natural way. With their algebraic techniques, several results about dicriticals
are proved in \cite{AL1,AL2}: the restriction of the pull-back of the pencil to
each dicritical is a polynomial, dicriticals are in one-to-one correspondence with the irreducible 
factors of the pencil, see \S\ref{sec-kdt} for details.

The core of the paper is to provide elementary algebraic proofs, 
valid also in positive characteristic,
for  rings of power series over a field by high-school algebra methods
following  the mathematical philosophy  of S.S~Abhyankar.
In order to achieve the proof, we proceed with a variation of the Newton-Puiseux process
realized by birational transformations, see also~\cite{cn:11} for similar approaches. Using Newton 
polygon techniques we describe a finite recursive
argument which presents in an explicit case a toric resolution of the pencil which is 
combinatorially much less complex than the resolution via standard blow-ups or quadratic transformations.
With this method, the dicritical divisors are in bijection with some edges of a sequence of
Newton polygons, from which we keep two important data: a $1$-variable polynomial
coming from the edge and a positive integer which is related to a quotient singularity
coming from a toric blowing-up.

We will apply these techniques in order to improve some bounds for the number of atypical values 
of special pencils given by J.~Gwo{\'z}diewicz in~\cite{gw:13}.

\smallskip
\noindent\textbf{Theorem 1.1.} (\cite{gw:13})\begingroup\em
Let $f(x, y),  l(x, y)\in\mathbb{C}\{x, y\}$, $f(0, 0) = l(0, 0) = 0$, be convergent power series
without common factor. Assume that the curve $l(x, y) = 0$ is smooth and that the curve
$f(x, y) = 0$ has $d$ components counted without multiplicities. Then, the pencil 
$f(x, y)-tl(x, y)^M = 0$, where $M$ is a positive integer, has at most $d$ nonzero atypical values.
\endgroup
\smallskip

Our main result provides a more accurately defined  bound for the number of atypical values
for a special pencil which is given by the sum of
the number of dicriticals plus the number of non-zero roots of the derivatives of the
polynomials associated to the dicriticals, see Theorem~\ref{thm:bound}.  
Moreover, this result
is true for formal power series over algebraically closed fields without
restrictions on the characteristic (except a mild separability hypothesis),
following Abhyankar's style. Example~\ref{ej-bound-sp} shows that our bound is sharp.

This local bound is also extended to the polynomial setting, see also \cite{JT}.
Since at  each base point at infinity the polynomial defines a local  special pencil then
the number of atypical values at infinity
is bounded  by the sum of the corresponding  local bounds we got in  Theorem~\ref{thm:bound}. 
Therefore, as a consequence, an algebraic proof of the next Theorem is given.
 
\smallskip
\noindent\textbf{Theorem 1.2.} (\cite{gw:13})\begingroup\em
Assume that the complex algebraic curve $f(x, y) = 0$  has $n$ branches at
infinity. Then the polynomial $f$ has at most $n$ critical values at infinity 
different from $0$.
\endgroup
\smallskip

We also provide examples  showing  that our bound  is also  sharper  than  the one of
\cite[Theorem 1.2]{gw:13}.
Notice that Gwo{\'z}diewicz's result is in the same spirit as the following Moh's Theorem~\cite{moh:74} 
as quoted by Ephraim's version~\cite{ep:83}.

\smallskip
\noindent\textbf{Theorem 2.2.} (\cite{ep:83})\begingroup\em
Assume that the complex algebraic curve $f(x, y) = 0$ has only one branch
at infinity. Then $f$ has no critical values at infinity. In particular, all curves $f(x, y) = t$
for $t\in\mathbb{C}$ are equisingular at infinity.
\endgroup
\smallskip

As T.T. Moh pointed out in~\cite{moh:74},  S.S.  Abhyankar gave another proof of this result 
by applying \cite[(3.4)]{am:73}.  
 
The number of branches at infinity  is related with the Jacobian problem: 
\begin{quote}\em
if $f_1,f_2\in \mathbb{K}[x,y]$,  $\chr(\mathbb{K})=0$, is a Jacobian pair, i.e.
its  Jacobian determinant is equal to $1$, then 
$\mathbb{K}[f_1,f_2]= \mathbb{K}[x,y]$. 
\end{quote}
T.T.~Moh remarks  in~\cite{moh:74}  that the following Engel's statement 
was a main tool in W. Engel's attempted proof of the
Jacobian conjecture, see \cite{engel}: 
\begin{quote}\em
For a special member of the pencil $f (x, y) + c = 0$, the number
of branches at infinity cannot be greater than the corresponding number for
the general member.
\end{quote}
In 1971 S.S.~Abyhankar found a counterexample
to Engel's statement.

Abhyankar and Moh, see e.g \cite{ab:77} for details, translated the Jacobian condition 
into conditions  on the resulting special expansions getting the following 
result: 

\smallskip
\noindent\textbf{The Two Point Theorem.} (\cite{ab:77}) \begingroup\em
If $f_1$ and $f_2$ is a Jacobian pair, then $f_1$ and $f_2$ have at most two points at infinity.
Moreover, it can be deduced that if the Jacobian condition  implies that $f_1$ and $f_2$ have at most one point at 
infinity then the Jacobian problem has an affirmative answer.
\endgroup
\smallskip

In fact if $f_1$ and $f_2\in \mathbb{K}[x,y]$ is a Jacobian pair with two points at infinity 
it follows from H.~{\.Z}o{\l}{\k a}dek  in   \cite{zl:08} that
 $f_1$ and $f_2$ have some  common dicriticals. In fact, not all the dicritical components
can be in common because in such a case  the degree of the polynomial map from $\mathbb{C}^2$
to $\mathbb{C}^2$ vanishes, hence
the Jacobian is identically zero (private communication to the authors of Pierrette Cassou-Nogu{\`e}s).

As we explain in \S\ref{sec:poli}, the conditions to reach this number
of branches at infinity are quite involved (in particular Moh-Ephraim result shows
that it is not possible when there is only one branch at infinity).
The last part of \S\ref{sec:poli} is devoted to construct two examples.
Example~\ref{ej-bound-sp-polynomial} is the polynomial version
of Example~\ref{ej-bound-sp}. 
Example~\ref{ejm-cotaep} answers positively the following  question proposed by
J.~Gwo{\'z}diewicz~\cite{gw:13}.

\smallskip
\noindent\textbf{Question.} \begingroup\em
Does there exist a polynomial $f(x, y)$ with $n$ nonzero critical values at infinity 
such that the curve $f(x, y) = 0$ has $n$ branches at infinity?
\endgroup

Example~\ref{ejm-cotaep} is a polynomial where the generic fiber has two branches at infinity.
Following a referee's comment we provide in Example~\ref{ejm:rfr} a way to construct
such examples with an arbitrary number of branches at infinity for the generic fiber.

\section{Toric-Newton transforms of special meromorphic functions}

For convenience we work over  an algebraically closed field $\mathbb{K}$. 
Nevertheless, the results are valid over any field since it is well known that
one can get the resolution of the base points of a pencil over a finite extension  of the base field $\mathbb{K}$.
Let $R=\mathbb{K}[[x,y]]$ be the formal power series ring over~$\mathbb{K}$; note that
most of the results are also valid for convergent power series in case of complex numbers and some of them
will also be valid for more general (almost complete) two-dimensional local rings (without restriction
on the characteristic and 
even in mixed characteristic) especially if they have \emph{analytical} properties,
see~\cite{aba:14}. 
Following Abhyankar we will study 
 regular local rings contained in $L$ (the fraction field of $R$) and dominating $R$
 though we will replace these rings by their completion for simplicity. We will denote $M(R)$
 the maximal ideal of $R$.

A formal power series $p(x,y)\in R$ can be evaluated at the only closed point $0\in\spec R$, giving 
an element $p(0,0)\in \mathbb{K}$. For an element $r(x,y):=\frac{p(x,y)}{q(x,y)}$ the evaluation at~$0$
can be defined on $\mathbb{P}^1_\mathbb{K}=\mathbb{K}\cup\{\infty\}$, with one important exception.
If $p,q\in M(R)$ are coprime, then $r(0)$ is not defined, it is \emph{undetermined}.
It is also useful to treat $r$ as the \emph{pencil} of curves $\{C_t: p=t q\}$, for $t\in\mathbb{K}\cup\{\infty\}$
having~$0$ as base point.

It is well-known that one can eliminate this indetermination via a birational map $\pi:S\to\spec(R)$,
which is the composition of a sequence of closed points blow-ups, also called quadratic transformations,
such that $\pi^*(r):S\to\mathbb{P}^1_\mathbb{K}$ is a well
defined morphism. This means that from  the point of view of pencils,
the strict transforms of the curves $C_t$ are disjoint.

Let $E=\pi^{-1}(0)$ be the exceptional divisor of the map $\pi$, with irreducible components $E_1,\dots,E_s$.
A divisor $E_i\subset E$ is called \emph{dicritical} (or some authors called them \emph{horizontal}) 
if $\pi^*(r) |_{E_i}$ is not a constant map, that is 
$\pi^*(r) (E_i)= \mathbb{P}^1_\mathbb{K}$.

Let us define 
$$
P(x,y,T):=p(x,y)-T q(x,y)=\sum_{i,j} A_{i,j} x^i y^j\in \mathbb{K}(T)[[x,y]]\ (T \text{ an indeterminate).} 
$$
We have two main interests: 
To study the curve $\tilde{C}$ given by $P\in\mathbb{K}(T)[[x,y]]$ and to study
the curves $C_t=\{P(x,y,t)=0\}$ for $t\in \mathbb{K}$, both generic and atypical.

\begin{dfn}
The \emph{Newton polygon} $\np(r)$ of~$r$ is the Newton polygon of  $P\in\mathbb{K}(T)[[x,y]]$
i.e. the compact faces of the convex closure of $\nr(P):=\supp(P) +\mathbb{N}^2\subset \mathbb{N}^2\subset\mathbb{R}^2$.
\end{dfn}

We are interested in giving several algebraic characterizations of 
dicritical divisors in a particular class of pencils, specially important for polynomial maps.

\begin{dfn} A meromorphic germ $r\in L$ (or its corresponding pencil) is called \emph{special} if  
$r(x,y)=\frac{p(x,y)}{x^c U(x,y)}$ for some local parameters $x,y\in R$, $c>0$ and
a unit $U(x,y)\in\mathbb{K}[[x,y]]$ (we always assume that $x$ does not divide $p(x,y)$).
\end{dfn}

\begin{obs}
Since $x$ does not divide $p$, the $y$-order~$d$ of $p(x,y)$ is well-defined, i.e.
the unique positive integer such that $p(0,y)$ is a series of order~$d$.
\end{obs}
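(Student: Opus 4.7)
The plan is to unpack the two assertions hidden in the remark, namely that $p(0,y)$ is a nonzero formal power series in $y$ alone (so that its $y$-adic order is defined) and that this order is strictly positive. Both facts follow from small observations about the quotient $R/(x)\cong\mathbb{K}[[y]]$.

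For well-definedness, I would identify $p(0,y)$ with the image of $p(x,y)$ under the reduction map $\mathbb{K}[[x,y]]\to\mathbb{K}[[x,y]]/(x)\cong\mathbb{K}[[y]]$. Under this identification, $p(0,y)=0$ if and only if $p\in(x)$, i.e.\ $x\mid p(x,y)$. Since the definition of a special pencil explicitly forbids this, $p(0,y)$ is a nonzero element of the formal power series ring in a single variable $y$, so the classical $y$-adic order $d:=\ord_y p(0,y)$ is a well-defined nonnegative integer. Equivalently, writing $p(x,y)=\sum A_{i,j}x^i y^j$, the coefficients $A_{0,j}$ are not all zero, and $d$ is simply the least index $j$ such that $A_{0,j}\neq 0$.

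For strict positivity, I would use that $r=p/(x^c U)$ is genuinely indeterminate at the closed point $0\in\spec R$: the denominator $x^c U$ lies in $M(R)$ (since $c>0$), and for the pencil to have $0$ as a base point the numerator must also lie in $M(R)$, otherwise $r(0)=\infty$ would be a well-defined value in $\mathbb{P}^1_{\mathbb{K}}$, contradicting the whole setup in which dicriticals are studied after resolving indeterminacy. Hence $p(0,0)=0$, which means that the constant term $A_{0,0}$ of the series $p(0,y)$ vanishes, giving $d\geq 1$.

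There is no genuine obstacle here; the only subtlety is making explicit the implicit assumption $p\in M(R)$ that comes with speaking of a pencil with base point at the origin, since the written definition of \emph{special pencil} only records $c>0$ and $x\nmid p$. Once this is spelled out, the remark is immediate and the $y$-order $d$ can be used freely in the Newton polygon constructions that follow.
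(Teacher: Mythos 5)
Your argument is correct, and it is exactly the unpacking the paper leaves to the reader (the remark is stated without proof). You correctly reduce well-definedness to the hypothesis $x\nmid p$ via the identification $R/(x)\cong\mathbb{K}[[y]]$, and you are right that strict positivity of $d$ uses the tacit assumption $p\in M(R)$, which the definition of special pencil omits but which is forced by the earlier discussion of pencils with $0$ as a base point (where the paper stipulates $p,q\in M(R)$ coprime). Flagging that implicit hypothesis explicitly is a sensible precision, not a departure from the paper's approach.
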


\begin{ejm}\label{ej:1}
The pencil 
$$
p_x(x,z,T)=(x^3-z^5)^2-x^6+x(x^5-z^2)^5
+5 x z^7 \left(x  -\frac{3}{4}z^2\right)-T z^{11}
$$
is special in  $\mathbb{K}(T)[[z,x]]$.
\end{ejm}

These pencils are called Ephraim pencils in~\cite{gw:13}, based on~\cite{ep:83}.
It was shown in \cite[Theorem~A]{AL1}  that for a special pencil~$r$, 
the restriction of the pull-back $\pi^*(r)$ to any dicritical divisor  
is a polynomial, for arbitrary two dimensional local regular
rings, not necessarily equicharacteristic. In this paper an
elementary proof of this result for $R=\mathbb{K}[[x,y]]$ is given;
the tools used in the proof
detect the so-called atypical fibers of the pencil which are also studied in this work. 

From now on we assume that $r\in L$ is special.
We are going to give a recursive method to solve a special pencil~$r$ by means of toric transformations 
and translations associated to $\np(r)$.

We introduce some notation. Fix an edge $\ell$ of $\np(r)$
which is contained in the line $n x+m y=\omega$ ($m,n\in\mathbb{N}$ coprime).
We denote by  $\omega_\ell$ the weight $\omega_\ell(i,j):=n i +m j$. 
This edge supports a $\omega_\ell$-quasi-homogeneous polynomial of degree~$\omega$
\begin{equation}\label{poly-toric}
P_\ell(x,y)=\sum_{\omega_\ell(i,j)=\omega} A_{i,j} x^i y^j=x^u y^v q_\ell(x^m,y^n),
\end{equation}
where $q_\ell(s_1,s_2)\in\mathbb{K}[T][s_1,s_2]$ is a homogeneous polynomial of degree~$d_\ell$ with at least two monomials
and coprime with $s_1 s_2$.
Note that
$$
P(x,y,T)=P_\ell(x,y)+\text{ monomials with }\omega_{\ell}\text{-degree }>\omega.
$$
The coefficients of $q_\ell(s_1,s_2)$ are in $\mathbb{K}$ with only one eventual exception:
if $v=0$ and $u=c$, i.e., the vertex $(c,0)$ is in $\ell\subset\np(r)$.
Bezout identity allows to choose 
\begin{equation}\label{eq:bezout}
a,b\in\mathbb{Z}_{>0}\text{ such that }b n-a m=1. 
\end{equation}

\begin{ntc}
The coprime weights $(n,m)$ will be denoted if necessary as $(n_\ell,m_\ell)$;
we will refer to $n$ as the \emph{v-ratio} and $m$ as the \emph{h-ratio}
of the edge~$\ell$.
\end{ntc}

The following concept appears also in \cite{cn:11}.

\begin{dfn} An edge $\ell$  of $\np(r)$ is called a \emph{dicritical edge} if $(c,0)$ is a vertex of  $\ell$
\end{dfn}

\begin{obs}\label{obs:trans}
We assume that if $(n,m)=(1,m)$ then $P_\ell$ is not proportional to $(y-Ax^m)^e$, $A\in\mathbb{K}$.
If it is the case, the change of variables $y=y_1+A x^m$
makes  the edge $\ell$ disappear.
The polygon $\np(r)$ has at most one dicritical edge.
\end{obs}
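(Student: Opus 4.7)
The remark contains two independent assertions; the plan is to settle each by a short direct argument, one by a weight-preservation computation and the other by an elementary convexity observation.

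For the first assertion, suppose $(n,m)=(1,m)$ and $P_\ell=C(y-Ax^m)^e$ with $C\in\mathbb{K}(T)$ and $A\in\mathbb{K}$. I would perform the change $y=y_1+Ax^m$ (leaving $x$ untouched) and verify two things. First, the $\omega_\ell$-weight of every monomial is preserved: a monomial $x^iy^j$ becomes $\sum_k\binom{j}{k}A^k x^{i+mk}y_1^{j-k}$, and each summand has $\omega_\ell$-weight $i+mk+m(j-k)=i+mj$, so every monomial of $P$ that contributed $\omega_\ell$-weight $>\omega$ still contributes only monomials of weight $>\omega$. Second, direct substitution gives $P_\ell(x,y_1+Ax^m)=Cy_1^e$, the single monomial supported at $(0,e)$. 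Combining the two, the transformed series $\tilde P(x,y_1,T)$ has $Cy_1^e$ as its unique monomial on the line $\omega_\ell=\omega=em$, every other monomial lying strictly above it. Since an edge of a Newton polygon requires at least two lattice points on the supporting line, the line $\omega_\ell=\omega$ no longer supports an edge of $\np(\tilde P)$, so $\ell$ has disappeared.

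For the second assertion, the strategy is to exploit that $(1,0)$ is a recession direction of $\nr(P)=\supp(P)+\mathbb{N}^2$: the half-line from $(c,0)$ in the positive $x$-direction lies entirely inside $\nr(P)$, so if $(c,0)$ belongs to the boundary of the convex closure of $\nr(P)$ then the boundary continues to the right of $(c,0)$ along an unbounded face. Consequently $(c,0)$ can be incident to at most one compact edge of $\np(r)$, namely the one going up and to the left, and by the definition of a dicritical edge this means at most one such edge exists. If $(c,0)$ is not on the boundary of the convex closure at all, the count is zero, which is also covered.

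The main technical point, and really the only place where care is needed, is the weight-preservation identity in the first part; the argument crucially uses $n=1$, as otherwise the substitution $y=y_1+Ax^m$ would not be $\omega_\ell$-compatible and could lower the weights of some monomials, invalidating the conclusion that the new Newton polygon has no edge on the line $\omega_\ell=\omega$. Once this identity is in hand, both assertions follow without further computation.
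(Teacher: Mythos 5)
The paper states Remark~\ref{obs:trans} without proof, so there is no argument of the authors' to compare against; your write-up correctly supplies one. The weight-preservation identity you isolate is exactly the point: because $n=1$, each summand $x^{i+mk}y_1^{j-k}$ in the expansion of $x^i(y_1+Ax^m)^j$ has $\omega_\ell$-weight $(i+mk)+m(j-k)=i+mj$, so the substitution is graded for $\omega_\ell$, and together with $P_\ell(x,y_1+Ax^m)=Cy_1^e$ only the lattice point $(0,e)$ survives at weight $\omega$, which therefore can no longer carry a compact edge. The recession-cone argument for the second assertion is also sound: $(c,0)\in\supp(P)$ sits at the minimal $y$-level, the horizontal ray from $(c,0)$ lies in $\nr(P)$, so $(c,0)$ is incident to at most one compact edge of $\np(r)$, hence at most one dicritical edge. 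Two cosmetic remarks: the case $A=0$ is vacuous, since then $P_\ell$ is a monomial and $\ell$ would be a vertex rather than an edge; and since $(c,0)\in\supp(P)$ has minimal $y$-coordinate, it always lies on the boundary of the convex closure, so your final case (``not on the boundary'') does not actually occur, though mentioning it is harmless.
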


\begin{ejm}\label{ej:2}
Let us consider $p_x$ as in Example~\ref{ej:1}. Its Newton polygon is
in Figure~\ref{fig:newtonpxz1}. There is only one edge~$\ell$
and $P_\ell=(x^3-z^5)^2$ ($x$ plays the role of $y$, we keep these variables for further use in
\S\ref{sec:poli}). The edge is not dicritical.
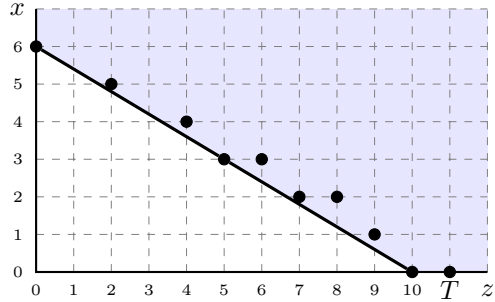
\begin{figure}[ht]
\begin{center}
\begin{tikzpicture}[scale=.5,xscale=1,vertice/.style={draw,circle,fill,minimum size=0.15cm,inner sep=0}]
\draw[fill, color=blue!10!white] (0,7)--(0,6)--(10,0)--(12,0)--(12,7);
\draw[help lines,step=1,dashed] (0,0) grid (12,7);
\draw[thick] (12,0)--(0,0)--(0,7);
\foreach \x in {0,...,6}
 \node[anchor=east] at (-.1,\x) {\tiny{\x}};
\foreach \y in {0,...,10}
 \node[anchor=north] at (\y,-.1) {\tiny{\y}};
\node at (-.5,7) {$x$};
\node at (12,-.5) {$z$};
\node[vertice] (a00) at (0,6) {};
\node[vertice] (a10) at (5,3) {};
\node[vertice] (a65) at (10,0) {};
\node[vertice] (a33) at (11,0) {};
\node[vertice] (a11) at (9,1) {};
\node[vertice] (a22) at (7,2) {};
\node[below] at (a33) {$T$};
\foreach \x/\xtext in {2,...,5}
\node[vertice] at (12-2*\x,\x) {};
\draw[,very thick] (a00)--(a10)--(a65);
\end{tikzpicture}
\caption{Newton polygon of $p_x$}
\label{fig:newtonpxz1}
\end{center}
\end{figure}
\end{ejm}

\begin{prop}\label{phim}
Assume that $\ell$ is not a dicritical edge.
The monomial transformation
$$
\varphi_M(x_1,y_1) :=(x_1^n y_1^a, x_1^m y_1^b), \quad \text{see \eqref{eq:bezout},}
$$
is birational (i.e. it is a composition of quadratic transformations) and
the polynomial $P_\ell$ is transformed as 
\begin{gather*}
P_\ell(x_1^n y_1^a, x_1^m y_1^b)=\beta x_1^{\omega} y_1^{a u+b v+ a m d_\ell} q_\ell(1,y_1).
\end{gather*}
\end{prop}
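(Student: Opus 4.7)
The proof splits naturally into two independent parts: verifying that $\varphi_M$ is birational, and performing the monomial substitution.

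For the birationality, my plan is to argue directly from the $2\times 2$ exponent matrix of $\varphi_M$, whose determinant is $bn - am = 1$ by the Bezout choice~\eqref{eq:bezout}. A monomial map with unimodular exponent matrix is a biregular isomorphism of the two-dimensional torus, and standard toric geometry expresses such a map as a composition of quadratic transformations via the Hirzebruch--Jung continued-fraction expansion of $m/n$.

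For the computation, I would substitute $\varphi_M$ into the factored form $P_\ell(x,y) = x^u y^v q_\ell(x^m, y^n)$ and simplify the two factors independently. The monomial piece gives $x_1^{nu+mv} y_1^{au+bv}$ immediately. For the $q_\ell$ piece, note that $x^m = x_1^{nm} y_1^{am}$ and $y^n = x_1^{nm} y_1^{bn}$ share the common factor $x_1^{nm}$, so homogeneity of $q_\ell$ of degree $d_\ell$ yields
\[
q_\ell(x_1^{nm} y_1^{am},\, x_1^{nm} y_1^{bn}) = x_1^{nm d_\ell}\, q_\ell(y_1^{am}, y_1^{bn}).
\]
Writing $q_\ell(s_1, s_2) = \sum_k A_k s_1^k s_2^{d_\ell - k}$, each monomial contributes a $y_1$-exponent $a m k + b n (d_\ell - k) = b n d_\ell - k(bn - am) = bn d_\ell - k$ by the Bezout relation. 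Factoring out $y_1^{(bn-1) d_\ell} = y_1^{am d_\ell}$ then recombines the sum into $q_\ell(1, y_1)$.

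Combining both factors and invoking that the vertex $(u, v + n d_\ell)$ of~$\ell$ satisfies $n u + m(v + n d_\ell) = \omega$, the $x_1$-exponent $nu + mv + nm d_\ell$ collapses to~$\omega$, giving exactly the claimed formula (with $\beta = 1$ emerging from the computation). The one subtle observation, which is the engine of the whole statement rather than a genuine obstacle, is that the Bezout identity $bn - am = 1$ is precisely what forces the $y_1$-exponents of the different monomials of $q_\ell$ to become consecutive integers; this is the mechanism by which the homogeneous two-variable polynomial $q_\ell$ degenerates to the one-variable polynomial $q_\ell(1, y_1)$ after the toric-Newton transform.
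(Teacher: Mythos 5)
Your proof is correct and follows essentially the same route as the paper: direct substitution into $P_\ell = x^u y^v q_\ell(x^m,y^n)$, homogeneity of $q_\ell$ to extract $x_1^{mn d_\ell}$, and the Bezout relation $bn-am=1$ to collapse $q_\ell$ to the one-variable polynomial $q_\ell(1,y_1)$ after factoring out $y_1^{am d_\ell}$. You merely unpack the single displayed line of the paper's argument and additionally supply the (standard, omitted-in-the-paper) unimodularity/continued-fraction justification for the birationality claim; your observation that the constant factor comes out as $1$ rather than $\beta$ is accurate, the $\beta$ in the paper's statement being a harmless notational slip.
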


\begin{proof}
Note that
\begin{gather*}
P_\ell(x_1^n y_1^a, x_1^m y_1^b)=
\beta x_1^{n u+m v} y_1^{a u+b v} q_\ell(x_1^{m n} y_1^{a m},x_1^{m n} y_1^{b n})=
\beta x_1^{\omega} y_1^{a u+b v+ a m d_\ell} q_\ell(1,y_1).
\end{gather*}
We use that $\omega=n u+m v+m n d_\ell$, $bn-am=1$,  and the fact that $q_\ell$
is homogeneous of degree~$d_\ell$.
\end{proof}

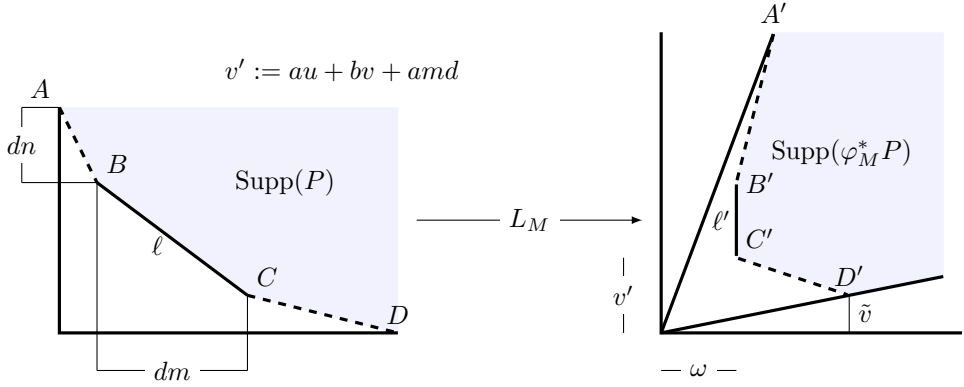
\begin{figure}[ht]
\begin{center}
\begin{tikzpicture}[scale=.5,xscale=1,vertice/.style={draw,circle,fill,minimum size=0.15cm,inner sep=0}]
\draw[fill, color=blue!5!white] (0,6)--(1,4)--(5,1)--(9,0)--(9,6);
\draw[very thick] (0,6)--(0,0)--(9,0);
\node[above left] at (0,6) {$A$};
\node[above right] at (1,4) {$B$};
\node[above right] at (5,1) {$C$};
\node[above] at (9,0) {$D$};
\node[below left] at (3,2.8) {$\ell$};
\node at (6,4) {$\supp(P)$};
\draw[very thick,dashed] (0,6)--(1,4);
\draw[,very thick] (1,4)--(5,1);
\draw[very thick,dashed] (5,1)--(9,0);
\draw (1,-1)--(5,-1) node[midway,fill=white] {$d m$};
\draw (-1,4)--(-1,6) node[midway,fill=white] {$d n$};
\draw (1,-1)--(1,4);
\draw (5,-1)--(5,1);
\draw (-1,6)--(0,6);
\draw (-1,4)--(1,4);
\draw[-latex] (9.5,3)--(15.5,3)node[midway,fill=white] {$L_M$};
\begin{scope}[xshift=16cm]
\draw[fill, color=blue!5!white] (3,8)--(2,4)--(2,2)--(5,1)--(7.5,1.5)--(7.5,8);
\draw[very thick] (0,8)--(0,0)--(8,0);
\node[inner sep=0] (A) at (3,8) {};
\node[above] at (A) {$A'$};
\node[inner sep=0]  (B) at (2,4) {};
\node[ right] at (B)  {$B'$};
\node[inner sep=0]  (C) at (2,2) {};
\node[above right] at (C)  {$C'$};
\node[inner sep=0]  (D) at (5,1) {};
\node[above] at (D) {$D'$};
\draw[dashed,very thick] (A)--(B);
\draw[very thick] (B)--(C);
\draw[dashed,very thick] (C)--(D);
\draw[very thick] (0,0)--(A);
\draw[very thick] (0,0)--(7.5,1.5);
\draw (0,-1)--(2,-1) node[midway,fill=white] {$\omega$};
\draw (-1,0)--(-1,2) node[midway,fill=white] {$v'$};
\draw (5,0)--(5,1) node[below right] {$\tilde{v}$};
\node at (-8.5,7) {$v':=a u +b v+  am d$};
\node at (1.6,3) {$\ell'$};
\node at (4.8,4.8) {$\supp(\varphi_M^* P)$};
\end{scope}
\end{tikzpicture}
\caption{$L_M$ for the edge $B C$.}
\label{fig:toric}
\end{center}
\end{figure}

This means that the image of the $\np(r)$ by the affinity 
$$
L_M:\mathbb{Z}^2\to\mathbb{Z}^2, 
\begin{pmatrix}
u\\v
\end{pmatrix}
\mapsto
\begin{pmatrix}
n&m\\a&b
\end{pmatrix}
\begin{pmatrix}
u\\v
\end{pmatrix}
$$ 
has a vertical edge
and $\supp(\varphi_M^* P)$, where $\varphi_M^* P=P(x_1^n y_1^a, x_1^m y_1^b,T)\in\mathbb{K}(T)[[x_1,y_1]]$,
is contained in $L_M(\nr(P))$, see~Figure~\ref{fig:toric}.
Let us factor
$$
q_\ell(s_1,s_2)=\beta\prod_{j=1}^e(s_2-\alpha_j s_1)^{m_j},\quad \beta,\alpha_j\in\mathbb{K}\setminus\{0\},\quad m_j>0,
\text{ i.e., }d_\ell=\sum_{j=1}^e m_j.
$$

\begin{dfn}
For $\ell$ a non-dicritical edge and $\alpha_j$ a root of $q_\ell(1,s)$, the 
\emph{toric-Newton transformation} associated
to $(\ell,\alpha_j)$ is the toric transformation $\varphi_M$
followed
by the translation $y_1=\bar{y}_1+ \alpha_j$.
\end{dfn}

\begin{dfn}
The \emph{strict transform} $P_{\ell,\alpha_j}(x_1,\bar{y}_1,T)$
of $P$ by the toric-Newton transformation associated
to $(\ell,\alpha_j)$ is
$$
P_{\ell,\alpha_j}(x_1,\bar{y}_1,T)=\frac{P(x_1^n 
(\bar{y}_1+ \alpha_j)^a, x_1^m (\bar{y}_1+ \alpha_j)^b,T)}%
{x_1^\omega (\bar{y}_1+ \alpha_j)^{\tilde{v}}}.
$$
where $\tilde v\leq a u+b v+ a m d$ is the minimum of
the powers of~$y_1$ which appear from the pull-back by $\varphi_M$.
\end{dfn}

\begin{ejm}\label{ej:3}
Let us study the strict transform for the toric-Newton transformation of Example~\ref{ej:1}.
The Newton polygon of this strict transform is shown in Figure~\ref{fig:newtonpxz2a}; the quasihomogeneous
polynomial is $(x_1-\frac{5}{2} z_1)^2$ and we are in the situation of Remark~\ref{obs:trans}. We perform the
translation and we obtain a special pencil whose Newton polygon, in Figure~\ref{fig:newtonpxz2b}
has only one edge and it is dicritical since the quasihomogeneous polynomial is $x_2^2-(T+\frac{5}{8}) z^3$.

\begin{figure}[ht]
\begin{center}
\begin{subfigure}[b]{.45\textwidth}
\begin{center}
\begin{tikzpicture}[scale=.5,xscale=1,vertice/.style={draw,circle,fill,minimum size=0.15cm,inner sep=0}]
\draw[fill, color=blue!10!white] (0,3)--(0,2)--(2,0)--(6,0)--(6,3);
\draw[help lines,step=1,dashed] (0,0) grid (6,3);
\draw[very thick] (6,0)--(0,0)--(0,3);
\foreach \x/\xtext in {0,...,2}
 \node[anchor=east] at (-.1,\x) {\tiny{\x}};
\foreach \y in {0,1,...,5}
 \node[anchor=north] at (\y,-.1) {\tiny{\y}};
\node[below] at (3,-.8) {$T$};
\node at (-.8,3) {$x_1$};
\node at (6,-.8) {$z_1$};
\draw[,very thick] (0,2)--(2,0);
\foreach \x/\xtext in {(5, 2), (5, 1), (4, 2), (5, 0), (4, 1), (3, 2), (4, 0), (3, 1), (2, 2),
(3, 0), (2, 1), (1, 2), (2, 0), (1, 1), (0, 2)}
\node[vertice] at \x {};
\end{tikzpicture}
\caption{}
\label{fig:newtonpxz2a}
\end{center}
\end{subfigure}
\begin{subfigure}[b]{.45\textwidth}
\begin{center}
\begin{tikzpicture}[scale=.5,xscale=1,vertice/.style={draw,circle,fill,minimum size=0.15cm,inner sep=0}]
\draw[fill, color=blue!10!white] (0,3)--(0,2)--(3,0)--(7,0)--(7,3);
\draw[help lines,step=1,dashed] (0,0) grid (7,3);
\draw[very thick] (0,3)--(0,0)--(7,0);
\foreach \x/\xtext in {0,...,2}
 \node[anchor=east] at (-.1,\x) {\tiny{\x}};
\foreach \y in {0,1,...,6}
 \node[anchor=north] at (\y,-.1) {\tiny{\y}};
\node[below] at (3,-.8) {$T$};
\node at (-.8,3) {$x_2$};
\node at (7,-.8) {$z_2$};
\draw[,very thick] (3,0)--(0,2);
\foreach \x/\xtext in {(7, 0), (6, 1), (5, 2), (6, 0), (5, 1), (4, 2), (5, 0), (4, 1), (3, 2),
(4, 0), (3, 1), (2, 2), (3, 0), (2, 1), (1, 2), (0, 2)}
\node[vertice] at \x {};
\end{tikzpicture}
\caption{}
\label{fig:newtonpxz2b}
\end{center}
\end{subfigure}
\caption{}
\label{fig:newtonpxz2}
\end{center}
\end{figure}
\end{ejm}

\begin{prop}\label{prop:toric-special}
The strict transform $P_{\ell,\alpha_j}(x_1,\bar{y}_1,T)$
is a special pencil in $\mathbb{K}[[x_1,\bar{y}_1]]$ such that
its $\bar{y}_1$-order is $m_j$.
\end{prop}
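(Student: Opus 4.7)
The plan is to write $P(x,y,T)=p(x,y)-T q(x,y)$ with $q(x,y)=x^{c}U(x,y)$, $U$ a unit, and to track the effect of $\varphi_M$ followed by the translation $y_1=\bar y_1+\alpha_j$ on the two pieces $p$ and $q$ separately, before normalizing by the common factor $x_1^{\omega}(\bar y_1+\alpha_j)^{\tilde v}$. Two observations drive the whole argument. First, since $\ell$ is not dicritical, $(c,0)\notin\ell$; moreover every $(k,l)\in\supp(q)$ satisfies $k\geq c$, so $nk+ml\geq nc>\omega$, and consequently $\supp(q)\cap\ell=\emptyset$. Second, $\bar y_1+\alpha_j$ is a unit in $\mathbb{K}[[x_1,\bar y_1]]$ because $\alpha_j\neq 0$, so any power of it may appear harmlessly in a denominator.

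For the special-form claim I push $q$ through the transformation: $q(\varphi_M(x_1,y_1))=x_1^{nc}y_1^{ac}\,\tilde U(x_1,y_1)$ where $\tilde U:=U\circ\varphi_M$ is still a unit in $\mathbb{K}[[x_1,y_1]]$, and after the translation $(\bar y_1+\alpha_j)^{ac}$ is also a unit. Using the strict inequality $nc>\omega$,
$$
q'(x_1,\bar y_1):=\frac{q(\varphi_M(x_1,\bar y_1+\alpha_j))}{x_1^{\omega}(\bar y_1+\alpha_j)^{\tilde v}}=x_1^{c'}\cdot(\text{unit in }\mathbb{K}[[x_1,\bar y_1]]),\quad c':=nc-\omega>0,
$$
which is exactly the special shape required.

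For the $\bar y_1$-order, I use that the disjointness $\supp(q)\cap\ell=\emptyset$ forces $P_\ell$ to coincide with the $\ell$-part of $p$ alone; in particular $P_\ell\in\mathbb{K}[x,y]$ is $T$-independent. Proposition~\ref{phim} then gives
$$
P_\ell(\varphi_M(x_1,y_1))=\beta x_1^{\omega}y_1^{v'}q_\ell(1,y_1)=\beta x_1^{\omega}y_1^{v'}\prod_{k=1}^{e}(y_1-\alpha_k)^{m_k},
$$
with $v':=au+bv+amd_\ell$, while every other term of $p(\varphi_M)$ carries a strictly higher power of $x_1$. Setting $x_1=0$ in the normalized pull-back and translating therefore yields
$$
p'(0,\bar y_1)=\beta(\bar y_1+\alpha_j)^{v'-\tilde v}\prod_{k=1}^{e}(\bar y_1+\alpha_j-\alpha_k)^{m_k}.
$$
The $k=j$ factor is $\bar y_1^{m_j}$; for $k\neq j$, the factor $(\bar y_1+\alpha_j-\alpha_k)^{m_k}$ is a unit because the $\alpha_k$ are distinct; and $(\bar y_1+\alpha_j)^{v'-\tilde v}$ is a unit because $\alpha_j\neq 0$. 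Hence $p'(0,\bar y_1)=\bar y_1^{m_j}\cdot(\text{unit})$, which simultaneously ensures $x_1\nmid p'$ and gives the claimed $\bar y_1$-order $m_j$.

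The main subtlety will be the bookkeeping of the integer $\tilde v$, but as observed above $\bar y_1+\alpha_j$ is a unit so the division by $(\bar y_1+\alpha_j)^{\tilde v}$ can never create poles; the only genuine input from the non-dicritical hypothesis is the strict inequality $nc>\omega$, which simultaneously yields $c'>0$ and the disjointness $\supp(q)\cap\ell=\emptyset$ used to compute $p'(0,\bar y_1)$. Combining $P_{\ell,\alpha_j}=p'-Tq'$ with the two displayed identities then concludes the proof.
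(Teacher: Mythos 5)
Your argument is correct and follows essentially the same route as the paper's proof: decompose $P=p-Tq$, push $p$ and $q$ separately through $\varphi_M$ and the translation, use $nc>\omega$ (from non-dicriticality of $\ell$) to see the $q$-piece contributes $x_1^{nc-\omega}\cdot(\text{unit})$, and use the factorization of $q_\ell(1,\cdot)$ together with the invertibility of $\bar y_1+\alpha_j$ to read off the $\bar y_1$-order $m_j$ from the $P_\ell$-part. You are somewhat more explicit than the paper about the bookkeeping of the exponent $\tilde v$ (the paper writes the exponent of $\alpha_j+\bar y_1$ as $ac-(au+bv+amd)$ rather than $ac-\tilde v$, leaving the possible discrepancy implicit), but since $\bar y_1+\alpha_j$ is a unit this makes no substantive difference.
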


\begin{proof}
The part of the strict transform corresponding to $P_\ell$
is
\begin{gather*}
\beta 
\bar{y}_1^{m_j} \prod_{k\neq j}
(\bar{y}_1+\alpha_j-\alpha_k)^{m_k}.
\end{gather*}
The rest of the strict transform is divided by $x_1$.
The monomial $T x^c U(x,y)$ is transformed into
$$
T x^{n c-\omega} (\alpha_j+\bar{y}_1)^{a c-(a u+b v+ a m d)} U(x_1^n (\bar{y}_1+ \alpha_j)^a, x_1^m (\bar{y}_1+ \alpha_j)^b)
$$
and the result follows.
\end{proof}

We will study later what to do if $\ell$ is a dicritical edge.
Because of Proposition~\ref{prop:toric-special}, this process
can be also applied to the strict transforms of $P$
by the toric-Newton transformations.
\begin{dfn}
The \emph{toric-Newton process} of $P$ is the  sequence
of special pencils obtained by applying  toric-Newton transformations recursively.
The \emph{tree of Newton polygons} of $P$ is the family of all 
Newton polygons in the toric-Newton process.
An edge of such a Newton
polygon is called a \emph{dicritical} edge if it is at the bottom of the polygon
and the coefficient for $(*,0)$ depends on $T$.
\end{dfn}

\begin{prop}
The toric-Newton process is finite.
\end{prop}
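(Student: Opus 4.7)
The natural invariant for a termination argument is the $y$-order $d$ of the pencil, i.e. the $y$-coordinate of the (topmost) intersection of $\np(r)$ with the vertical axis. By Proposition~\ref{prop:toric-special}, after a toric-Newton transformation associated with an edge $\ell$ and a root $\alpha_j$ of $q_\ell(1,s)$ of multiplicity $m_j$, the strict transform is again special and has $y$-order exactly $m_j$. The plan is to verify the strict inequality $m_j<d$, so that the $y$-order of the pencil drops at every application of the process.

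Write the edge $\ell$ as the segment joining $(u_1,v_1)$ and $(u_2,v_2)$ with $v_1>v_2$. When $u_1>0$, the top vertex $(0,d)$ of $\np(r)$ lies strictly above $\ell$, so $v_1<d$; since the vertical extent of $\ell$ is $n_\ell d_\ell = v_1-v_2\le v_1$, one gets $m_j\le d_\ell<d$. When $u_1=0$, necessarily $v_1=d$ and $n_\ell d_\ell=d-v_2$; if either $v_2>0$ or $n_\ell\ge 2$, then $d_\ell<d$ and $m_j\le d_\ell<d$ follows. The only remaining possibility is $v_2=0$ and $n_\ell=1$, where $d_\ell=d$ and $\ell$ runs from $(0,d)$ to $(u_2,0)$; here Remark~\ref{obs:trans} (applied as a blanket convention before computing the Newton polygon at each node of the process) forces $q_\ell(1,s)$ to have at least two distinct roots, so every $m_j<d_\ell=d$.

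Once $m_j<d$ is established, each branch of the tree of Newton polygons produces a strictly decreasing sequence of positive integers, hence has length bounded by $d_0$, the $y$-order of the initial pencil. Since every Newton polygon has only finitely many edges and each associated polynomial $q_\ell(1,s)$ has only finitely many roots, every node of the tree has only finitely many children, and therefore the whole tree is finite.

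The most delicate point is the degenerate case $v_2=0$, $n_\ell=1$: one must check that Remark~\ref{obs:trans} is invoked at every step of the process (not only at the initial pencil), so that the configuration $q_\ell(1,s)=\beta(s-\alpha_j)^d$ is genuinely ruled out throughout the recursion; otherwise the $y$-order could remain constant and the argument would collapse.
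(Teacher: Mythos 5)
Your case analysis showing $m_j<d$ is correct in the non-degenerate cases, and you correctly isolate the problematic configuration $v_2=0$, $n_\ell=1$, $q_\ell(1,s)=\beta(s-\alpha)^{d}$. However, you leave a genuine gap precisely at the point you flag as delicate. The issue is not simply that Remark~\ref{obs:trans} must be ``invoked at every step''; the issue is that invoking it is itself an iterative procedure whose termination needs to be proved. After the change $y=y_1+Ax^m$, the new Newton polygon again has $(0,d)$ as a vertex, and its bottom edge may again be of the forbidden type with a strictly larger $m'>m$. Declaring Remark~\ref{obs:trans} a ``blanket convention applied before computing the Newton polygon'' presupposes that this preprocessing terminates; without that, a branch of the tree could contain infinitely many translations during which the $y$-order stays equal to $d$, and your strictly decreasing invariant would never advance.

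The paper closes exactly this gap by exploiting speciality: the monomial $Tx^c$ always contributes the vertex $(c,0)$ to $\np(r)$, and it survives (with $c$ replaced by an updated exponent) through every translation. If the current bad edge joins $(0,d)$ to $(md,0)$, then $md\le c$; since each translation produces a bad edge (if any) with strictly larger $m$, at most $\lfloor c/d\rfloor$ consecutive translations can occur before the edge either becomes dicritical or ceases to be degenerate. Equivalently, as the paper phrases it, while $Tx^c$ is invisible in the polygon one is following the finite resolution of a branch of $p(x,y)=0$, and once that branch is smooth the $T$-monomial forces the recursion to stop. Your proof needs this bound, or some substitute for it, to be complete; everything else you wrote (the decrease of $y$-order outside the degenerate case, the finite branching) matches the paper's argument.
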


\begin{proof}
Note that the $y$-order of the special pencils decreases unless we are in the situation
of Remark~\ref{obs:trans}. Since the pencils are special only a finite number of 
translations may arise until we reach the $T$-monomial. Note that while the term $Tx^c$
is not present in $NP(r)$ one is following the resolution (of one branch) of the fibre $p(x,y)=0$.
This means that  after a finite number of toric maps and translations we arrive to a point $Q$
where the branch is non-singular and eventually non-reduced. Then the local equation 
of the total transform of $P$ is $h^k(x_1,y_1) u(x_1,y_1)+Tx_1^{e_1}$ with $u(0,0)\ne 0$
and $h(x_1,y_1)=(y_1+\ldots).$  It is now clear we can make a change of coordinates
$y_1=\overline{y}+a_1x_1$ such that   $\overline{h}(x_1, \overline{y}))=\overline{y}+a_{e+1}x^{e+1}$
\end{proof}

\begin{obs}
Note that this is the case for the pencil in Example~\ref{ej:1}.
\end{obs}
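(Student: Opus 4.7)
The plan is to simply trace through the toric-Newton process for the pencil $p_x$ of Example~\ref{ej:1}, using the explicit calculations already recorded in Examples~\ref{ej:2} and~\ref{ej:3}, and verify that the terminal form described at the end of the proof of the preceding proposition is reached after a small finite number of steps.

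First I would recall that, as shown in Example~\ref{ej:2}, the Newton polygon of $p_x$ has a unique edge $\ell$ with $(n_\ell,m_\ell)=(5,3)$ and quasihomogeneous polynomial $P_\ell=(x^3-z^5)^2$; in particular $\ell$ is not dicritical, and $q_\ell(1,s)=(s-1)^2$ has the single root $\alpha=1$ of multiplicity~$2$. Applying the toric-Newton transformation associated to $(\ell,1)$ produces, by Proposition~\ref{prop:toric-special}, a special pencil of $y$-order~$2$; its Newton polygon (Figure~\ref{fig:newtonpxz2a}) has the single edge with quasihomogeneous polynomial proportional to $(x_1-\tfrac52 z_1)^2$, which is precisely the forbidden shape of Remark~\ref{obs:trans}.

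Next I would perform the translation $x_1=x_2+\tfrac52 z_2$ prescribed by Remark~\ref{obs:trans}, after which (see Figure~\ref{fig:newtonpxz2b} and Example~\ref{ej:3}) the $T$-monomial $T z_2^3$ now appears at the bottom vertex of the new polygon, and the unique edge carries the quasihomogeneous polynomial $x_2^2-(T+\tfrac58)z_2^3$. This edge is dicritical, so the toric-Newton process halts here. In the notation of the proof of the proposition, we have reached the terminal form $h(x_2,z_2)^k\,u(x_2,z_2)+T x_2^{e_1}$ (up to the symmetric role of the two variables) with $k=2$, $h=x_2$, $u(0,0)\neq 0$, and $e_1=3$.

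The only step that requires any care is verifying that no additional edges appear after the translation, i.e.\ that the monomials of $P$ off the edge $\ell$ and the $T z^{11}$ term push inside the new polygon under the composition of the toric map and the translation; but this is immediate from the $\omega_\ell$-weights of the monomials of $p_x$ and the fact that the translation preserves the weight filtration on the fiber over the dicritical vertex. Thus the process for $p_x$ terminates in one toric-Newton transformation plus one translation, confirming the remark.
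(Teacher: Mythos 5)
Your core verification is exactly the justification the paper itself relies on: the remark has no separate proof, it simply points back to the computations of Examples~\ref{ej:2} and~\ref{ej:3}, and your tracing of the process (one toric--Newton transformation associated to the double root of $q_\ell(1,s)=(s-1)^2$, then the translation forced by Remark~\ref{obs:trans}, after which the $T$-coefficient sits at the bottom vertex and the unique edge, with support polynomial $x_2^2-(T+\tfrac58)z_2^3$, is dicritical, so the process stops) is the intended content. Two details are off, however. First, the weights of the initial edge are swapped: the edge of Figure~\ref{fig:newtonpxz1} lies on $3i+5j=30$ with $i$ the exponent of the distinguished variable $z$, so $(n_\ell,m_\ell)=(3,5)$ (v-ratio $3$, h-ratio $5$), not $(5,3)$; this is consistent with the paper's later statement that the final dicritical edge has v-ratio $2$.

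Second, and more substantively, your closing identification with the terminal normal form of the proposition's proof is not correct. You claim the process ends in the shape $h^k u+T x^{e_1}$ with $h=x_2$, $k=2$ and $u(0,0)\neq 0$; but the $T$-free part of the final strict transform is $x_2^2-\tfrac58 z_2^3+\cdots$, an irreducible ordinary cusp (this is exactly what Example~\ref{ej:4} records: ``the general fiber is an ordinary cusp''), and a cusp cannot be written as a unit times the square of a smooth branch, since $z_2^3$ is not divisible by $x_2^2$. What actually happens in this example is that the recursion halts \emph{before} the branch of $p$ is resolved, because the dicritical edge appears first; that is a perfectly good (indeed the typical) termination mode and is all the remark asserts, but it is not the smooth-nonreduced configuration $h^k u+Tx_1^{e_1}$ described at the end of the proof. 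Relatedly, your assertion that ``no additional edges appear after the translation'' is ``immediate'' because ``the translation preserves the weight filtration'' is not right as stated: the translation does change the polygon (the vertex $(2,0)$ disappears precisely because the quasihomogeneous form is the perfect square $(x_1-\tfrac52 z_1)^2$, and $(3,0)$ becomes a vertex with coefficient $-(T+\tfrac58)$), so this step is justified by the explicit computation recorded in Example~\ref{ej:3} and Figure~\ref{fig:newtonpxz2b}, not by a weight-preservation argument.
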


\section{Dicritical edges}

Let us study now what happens with dicritical edges. We start
with a simple proof of \cite[Theorem~A]{AL1} when the regular local ring
is a formal power series ring.

\begin{prop}\label{prop:dicpol}
Let $P(x,y,T):=p(x,y)-T x^c U(x,y)$ be a special pencil, 
then at each dicritical divisor $E$ the function $\pi^*(r(x,y))_{|_E}$ is a polynomial.
\end{prop}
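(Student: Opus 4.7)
The plan is to exploit the recursive structure of the toric--Newton process of Section~1: every dicritical divisor $E$ in the resolution $\pi$ arises as the exceptional divisor $\{x_1=0\}$ of the toric transformation $\varphi_M$ performed at the stage when the corresponding dicritical edge first appears. Since toric--Newton transformations are birational and, by Proposition~\ref{prop:toric-special}, preserve the class of special pencils, it suffices to compute $\varphi_M^*(r)|_E$ in local coordinates at that stage, where $\ell$ is a dicritical edge of the (then current) Newton polygon.

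Before computing, I would normalize: multiplying the defining equation of the pencil by the unit $U^{-1}$ does not change the pencil, so one can assume $U\equiv 1$ and $P(x,y,T)=p(x,y)-Tx^c$. This avoids spurious $T$-dependence coming from the pull-back of $U$, and reduces the proof to a single clean computation.

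For the main computation, assume $\ell$ is a dicritical edge of $\np(r)$ with vertex $(c,0)$, so that in the notation of~\eqref{poly-toric} one has $v=0$, $u+md_\ell=c$, and $\omega=nc$. The lattice points on $\ell$ are $(c-km,kn)$ for $k=0,\dots,d_\ell$, and under $\varphi_M$ they map to $(x_1,y_1)$-exponents $(nc,\,ac+k)$, using $bn-am=1$. A short Bezout argument shows that every $(i,j)\in\supp(p)$ strictly above $\ell$ maps to exponents with $x_1$-power $>nc$ and $y_1$-power $\geq ac$. Hence one can factor
$$
p(\varphi_M)=x_1^{nc}\,y_1^{ac}\,G(x_1,y_1),
$$
with $G\in\mathbb{K}[[x_1,y_1]]$ and, by Proposition~\ref{phim} applied to the quasi-homogeneous part $p_\ell$ of $p$,
$$
G(0,y_1)=\bar q(y_1):=\sum_{k=0}^{d_\ell} A^p_{c-km,\,kn}\,y_1^k\;\in\;\mathbb{K}[y_1].
$$
Since $\varphi_M^*(x^c)=x_1^{nc}y_1^{ac}$, one gets $\varphi_M^*(r)=G(x_1,y_1)$, and therefore $\pi^*(r)|_E=G(0,y_1)=\bar q(y_1)$, a polynomial in the parameter $y_1$ of $E$.

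The main obstacle is establishing the inequality $ai+bj\geq ac$ for $(i,j)\in\supp(p)$ lying above $\ell$; this rests essentially on $bn-am=1$ and requires a small case split according to whether $j=0$ or $j\geq 1$ (in the latter case one extracts $bj>bn(c-i)/m=(a+1/m)(c-i)>a(c-i)$). Everything else is bookkeeping, namely identifying the given dicritical divisor $E$ with the exceptional divisor of $\varphi_M$ at the stage of the toric--Newton process where its dicritical edge is encountered, and observing that subsequent blow-ups at base points that remain on $E$ do not modify the restriction already computed.
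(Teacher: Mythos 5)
Your proof follows essentially the same route as the paper's: reduce via the toric--Newton process to the stage where the dicritical edge $\ell$ appears, pull back by the monomial map $\varphi_M$, factor the common monomial $x_1^{\omega}y_1^{ac}$ out of numerator and denominator, and restrict to $\{x_1=0\}$. Your normalization $U\equiv 1$ and your explicit check that $ai+bj\geq ac$ on $\supp(p)$ (so that $y_1^{ac}$ really does factor out) are small elaborations of steps the paper states more tersely via $\tilde v$; the core computation is the same.
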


\begin{proof}
The previous process allows to resolve the base points of the pencil by toric maps 
and translations and moreover pencils arising at the process
are still special. Let us study what happens at a dicritical edge~$\ell$.
We keep the notation of~\eqref{poly-toric} and we get that
$$
q_\ell(1,s)=a_0 s^{d_\ell}+a_1 s^{d_\ell-1}+\dots+a_{d_\ell-1}  s-(T-a_{d_\ell})
$$
where $a_j\in\mathbb{K}$.
We denote again $\pi(x_1,y_1)=(x_1^n y_1^a,x_1^m y_1^b)$ the toric transformation associated
to~$\ell$. Then
$$
P_\ell(x_1^n y_1^a,x_1^m y_1^b)=x_1^{\omega} y_1^{\tilde{v}}
\left(q_\ell(1,y_1)+x_1 G(x_1,y_1)\right),
$$
and $x_1=0$ is the equation of $E$ and $G(x_1,y_1)$ is some series. 
Notice that 
$$\frac{\pi^*(p)}{\pi^*(x^c U(x,y))}=\frac{x_1^{\omega} y_1^{\tilde{v}}
\left(q_\ell(1,y_1)+x_1 G(x_1,y_1)\right)}{x_1^{\omega} y_1^{\tilde{v}}
(U(0,0)+x_1H(x_1,y_1))}=q_\ell(1,y_1)+x_1 G(x_1,y_1),$$
where $U(0,0)\neq 0$ and $H(x_1,y_1)$ is some series.
Restricting to $x_1=0$ we obtain the desired result. 

The computations above also prove that the corresponding polynomial map 
$q_E:E\to \mathbb{P}^1$, where $q_E(z):=q_\ell(1,z)-T$,
has degree $d_E:=d_\ell$.
\end{proof}

It is not hard to check  that the dicritical divisors of $r$ are in one to one correspondence with
the  dicritical edges
of $\np(r)$ and its transforms. We study now the toric-Newton transformations for dicritical edges. Note
that the toric part behaves as in the non-dicritical case, as shown in the proof
of Proposition~\ref{prop:dicpol}, but the translation part depends on the particular values
of~$t$. Moreover, separability properties of the polynomial $q_E(z)$ have a strong influence on
the behavior of the fibers of the pencil near the dicritical~$E$.

\begin{prop}\label{prop:typical}
Let $P(x,y,T)$ be a special pencil as above and let $E$ be a dicritical divisor 
of $r$ associated to a dicritical edge~$\ell$ of the toric-Newton process of~$P$.
Assume that $q_E(z)$ is a separable polynomial, i.e its derivative is not identically zero.

Let $A_E^*:=\{q_E(\alpha)\mid q_E'(\alpha)=0\}$ and let $t_{0,E}:=q_E(0)$. Then,
the strict transform of the germ of the curve $p(x,y)-t x^c U(x,y)$
contains exactly $d_E$ non-singular transversal curvettes meeting at $d_E$ distinct points of~$E$,
in the following cases:
\begin{enumerate}
\enet{\textup{(\arabic{enumi})}}
\item If $t\notin A_E^*$ and $t\neq t_{0,E}$.
\item If $t=t_{0,E}$, $t\notin A_E^*$ and $n=1$. 
\end{enumerate}
\end{prop}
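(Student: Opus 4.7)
The plan is to apply the toric transformation associated to the dicritical edge $\ell$ and analyse the local equation of the strict transform of $C_t$ at its intersection with $E$ by the implicit function theorem; the two cases then amount to checking that no intersection point sits on an auxiliary exceptional divisor of the chart.

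First, following the computation in the proof of \autoref{prop:dicpol}, the toric map $\varphi_M(x_1,y_1)=(x_1^n y_1^a,x_1^m y_1^b)$ pulls $P$ back to
$$\pi^* P = x_1^{\omega} y_1^{\tilde v}\bigl(q_\ell(1,y_1) + x_1\,G(x_1,y_1,T)\bigr),$$
with $q_\ell(1,z) = q_E(z) - T$ for a polynomial $q_E\in\mathbb{K}[z]$ of degree $d_E$. Setting $T = t$, the strict transform of $C_t$ is locally cut out by
$$F_t(x_1,y_1) := q_E(y_1) - t + x_1\,G(x_1,y_1,t) = 0,$$
with $E = \{x_1=0\}$.

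Next, the intersection of $\{F_t=0\}$ with $E$ is the zero set of $q_E(y_1)-t$. Separability of $q_E$ together with $t\notin A_E^*$ produces $d_E$ distinct simple roots $\alpha_1,\dots,\alpha_{d_E}$ with $q_E'(\alpha_i)\neq 0$, so $\partial_{y_1}F_t(0,\alpha_i)\neq 0$. The implicit function theorem therefore yields a smooth branch $y_1 = \beta_i(x_1)$ of $\{F_t=0\}$ through $(0,\alpha_i)$, whose tangent
$$q_E'(\alpha_i)(y_1-\alpha_i) + G(0,\alpha_i,t)\,x_1 = 0$$
is transversal to $E$.

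Finally, one must check that the $d_E$ points $(0,\alpha_i)$ are all \emph{smooth} points of $E$, i.e., distinct from intersections of $E$ with other exceptional divisors visible in the chart. The only candidate for such a companion is $\{y_1=0\}$, which is exceptional exactly when $a>0$. In Case~(1) the hypothesis $t\neq t_{0,E}=q_E(0)$ forces $\alpha_i\neq 0$ for every $i$, so the intersection points automatically avoid $\{y_1=0\}$ regardless of $a$. In Case~(2) the root $\alpha=0$ does appear, but the hypothesis $n=1$ permits the B\'ezout choice $(a,b)=(0,1)$; then $\varphi_M(x_1,y_1) = (x_1, x_1^m y_1)$ and $\{y_1=0\}$ is the strict transform of $\{y=0\}$, not an exceptional divisor. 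Hence $(0,0)$ is again a smooth point of $E$, and the preceding step supplies the missing transversal curvette. The main subtlety lies precisely in this Case~(2) check: the hypothesis $n=1$ is imposed exactly so that one may choose $a=0$ and thereby prevent $\{y_1=0\}$ from becoming a spurious exceptional component that would spoil the smoothness of $E$ at the image of the chart origin.
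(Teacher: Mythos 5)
Your proof follows essentially the same route as the paper's: pull back by $\varphi_M$, write the strict transform locally as $q_E(y_1)-t+x_1G$, and deduce from the simplicity of the roots of $q_E-t$ that each branch is smooth and transversal to $E=\{x_1=0\}$. The only stylistic difference is that you invoke the implicit function theorem (equivalently the Jacobian criterion in $\mathbb{K}[[x_1,y_1]]$) where the paper spells out the same thing through the Newton--polygon decomposition $b_0\bar y_1^{d_E}+\dots+b_{d_E-1}\bar y_1+x_1(\dots)$ after the translation $y_1=\bar y_1+\alpha_i$; these are the same observation. You also correct a harmless typo ($P_\ell$ vs.\ $P$ in the displayed formula).

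One point of genuine divergence is your handling of Case~(2). The paper disposes of it in one sentence (``the factor corresponding to $\alpha_i=0$ is again a curvette''), while you give the geometric reason: the chart point $(0,0)$ must be a smooth, non-corner point of $E$, and you verify this by taking the B\'ezout solution $(a,b)=(0,1)$. This is the right geometric insight --- when $n=1$ the cone $\langle(1,m),(0,1)\rangle$ is already nonsingular, so $E$ meets the strict transform of $\{y=0\}$ (not another exceptional component) at the chart origin --- and it explains exactly why the hypothesis $n=1$ is needed. Be aware, though, that the paper's convention fixes $a,b\in\mathbb{Z}_{>0}$, so $(a,b)=(0,1)$ is formally excluded by the framework as stated; if you want to stay within it, the same conclusion follows by noting that any extra divisor $\{y_1=0\}$ appearing when $a\ge 1$ is not part of the (log-canonical) resolution of the pencil when $n=1$, or simply by switching to the chart with $a=0$, which is a perfectly good monomial transformation even if the paper's Bezout normalisation does not choose it. This is a notational tension, not a gap in your argument.
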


\begin{proof}
We start with the first case. Since $t\notin A_E^*$ and the polynomial $q_E(z)$ is separable, 
we have that $\gcd(q_E(z)-t, q_E'(z))=1$ and
all the roots of $q_E(z)-t$ are simple roots, i.e.:
$$
q_E(z)-t=\prod_{i=1}^{d_E}(z-\alpha_i),
\quad \alpha_i\ne\alpha_j,\text{ if } i\neq j.
$$
Hence, the quasi-homogeneous polynomial associated to the edge~$\ell$ for the suitable
strict transform of $P(x,y,t)=0$ is
\begin{equation}\label{eq:nondeg}
\prod_{i=1}^{d_E}(y_1^n-\alpha_i x_1^m).
\end{equation}
Since $\alpha_i\neq 0$ and since $t\neq t_{0,E}$, all the above factors look similar.
Hence if we consider the (non trivial) translation $y_1=\bar{y}_1+\alpha_i$
\begin{equation}\label{eq:curvetas}
q_E(\bar{y}_1+\alpha_i)-t=b_0\bar{y}_1^{d_E}+b_1\bar{y}_1^{d_E-1}\ldots+ b_{d_E-1} \bar{y}_1,\quad
b_{d_E-1}\neq 0.
\end{equation}
If we compose the toric map of the proof
of Proposition~\ref{prop:dicpol} with the above translation, we obtain
then, up to terms of higher degree, that the strict transform
is written as 
$$
b_0\bar{y}_1^{d_E}+b_1\bar{y}_1^{d_E-1}\ldots+ b_{d_E-1} \bar{y}_1+x_1(\dots)
$$
and one gets $d_E$ non-singular curves intersecting transversally
the dicritical divisor $E: \{x_1=0\}$ at different points.

If $t=t_{0,E}$ is not a root of $q_E'(z)$ and $n=1$, though the Newton polygon is changing,
the factor corresponding to $\alpha_i=0$ is again a curvette.
\end{proof}

\begin{obs}
 With this method, along the exceptional dicritical  divisor there will be no base points of the 
pull-back of the pencil. By this process we get a log-canonical resolution (with quotient singularities)
the base points of the pencil. 
Since at each step we perform toric quadratic transformations we must be careful with the behavior
when no translation is needed.
\end{obs}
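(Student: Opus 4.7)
The plan is to use Proposition~\ref{prop:dicpol}, which identifies the restriction of $\pi^*(r)$ to the dicritical divisor~$E$ with the polynomial map $q_E(z)$. Consequently the strict transform of the fiber $p(x,y)-t x^c U(x,y)=0$ meets $E$ exactly at the roots of $q_E(z)-t$. The separability of~$q_E$ combined with $t\notin A_E^*$ implies $\gcd(q_E(z)-t,q_E'(z))=1$, so
\[
q_E(z)-t=\prod_{i=1}^{d_E}(z-\alpha_i)
\]
has $d_E$ pairwise distinct roots. This already provides $d_E$ distinct candidate intersection points on $E$; what remains is to check that at each of them the strict transform consists of a single smooth branch transversal to~$E$, and that no other irreducible components appear.

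For a nonzero root $\alpha_i\neq 0$ I would compose the toric map $\varphi_M$ with the translation $y_1=\bar y_1+\alpha_i$, exactly as in the definition of the toric-Newton transformation. Substituting into the quasi-homogeneous expansion on $\ell$, the strict transform near $(x_1,\bar y_1)=(0,0)$ reads
\[
q_E(\bar y_1+\alpha_i)-t+x_1\,G(x_1,\bar y_1)=q_E'(\alpha_i)\,\bar y_1+O(\bar y_1^2)+x_1\,G(x_1,\bar y_1),
\]
with $q_E'(\alpha_i)\neq 0$ by simplicity of the root. The linear term in $\bar y_1$ being nonzero forces the existence of a unique smooth branch $\bar y_1=\psi(x_1)$ transversal to $E=\{x_1=0\}$, via the formal implicit function theorem applied in $\mathbb{K}[[x_1]]$.

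For case~(1) every $\alpha_i$ is nonzero (because $t\neq t_{0,E}=q_E(0)$), so the previous local model applies uniformly and delivers exactly $d_E$ transversal curvettes. Case~(2) is where the only real subtlety lies: one root---say $\alpha_1$---equals zero, so the corresponding toric-Newton transformation has trivial translation. The role of the hypothesis $n=1$ is to allow the Bezout choice $a=0$, $b=1$, turning $\varphi_M$ into $(x_1,y_1)\mapsto (x_1,x_1^m y_1)$; the exceptional locus of this map is only $\{x_1=0\}$, so $\alpha_1=0$ is an ordinary smooth point of $E$ with no secondary exceptional divisor through it. The same local argument as above, together with $q_E'(0)\neq 0$ (which follows from $t_{0,E}\notin A_E^*$), gives a smooth transversal curvette at this point as well.

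The main obstacle is precisely the case $\alpha_1=0$ in~(2) when the assumption $n=1$ is dropped: for $n>1$ one necessarily has $a\geq 1$, so $\varphi_M$ produces a second exceptional divisor $\{y_1=0\}$ which meets $E$ at $\alpha_1=0$, and the intersection becomes a (quotient-singular) base point of the pulled-back pencil that requires further toric-Newton steps before any curvette can be read off. Tracking how the Newton polygon of the strict transform near the original vertex $(c,0)$ interacts with this new exceptional divisor is the delicate computation that the assumption $n=1$ allows one to bypass.
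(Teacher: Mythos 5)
The statement you are asked to justify is a remark, and the paper itself gives no proof of it: it is an observation about the resolution process surrounding Proposition~\ref{prop:typical}. Your proposal in effect re-derives Proposition~\ref{prop:typical}, reading off the intersection of the strict transform with $E$ from the simple roots of $q_E(z)-t$ and, at each nonzero root $\alpha_i$, using the formal implicit function theorem on $q_E'(\alpha_i)\,\bar y_1+O(\bar y_1^2)+x_1G(x_1,\bar y_1)$ to extract a smooth branch transversal to $E$. This is equivalent in content to the Newton-polygon reading in the paper's proof (the nonvanishing of $b_{d_E-1}$ in~\eqref{eq:curvetas}), so the core of your argument is correct, and your last paragraph rightly identifies the case $\alpha_1=0$ with trivial translation as exactly what the remark's closing sentence is cautioning about.

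Two small corrections are in order. First, convention~\eqref{eq:bezout} fixes $a,b\in\mathbb{Z}_{>0}$, so even for $n=1$ the paper does not take $a=0$; the genuine content of the hypothesis $n=1$ is that the relevant bottom edge of the Newton polygon of $P(x,y,t_{0,E})$ still has height one, so the branch through $\alpha_1=0$ remains a curvette and no bamboo is attached to $E$, as in the final line of the proof of Proposition~\ref{prop:typical}. Second, when $n>1$ the locus you call a ``(quotient-singular) base point'' is not a quotient singularity of the $\varphi_M$-chart: since $bn-am=1$ that chart is smooth by construction. The remark's ``log-canonical resolution with quotient singularities'' refers rather to the torus-fixed points that the toric process leaves unrefined (a weighted blow-up with weights $n,m>1$ carries cyclic quotient points at the two ends of~$E$). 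What actually goes wrong at $\alpha_1=0$ for $n>1$ is that some branches of $C_{t_{0,E}}$ escape $E$ along a bamboo of further exceptional divisors instead of meeting $E$ as curvettes, which is precisely the content of Theorem~\ref{thm:atypical}\ref{thm:atypical2}.
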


From  now on we assume that the map $q_E(z)$ is separable, i.e. either $\chr(\mathbb{K})=0$ or 
$\chr(\mathbb{K})=p$ and $q_E'(z)\neq 0$.

\begin{dfn}\label{dfn:atypical}
A value $t\in  \mathbb{K}$ is called \emph{a typical value
for $P(x,y,T)$ at $E$}  if the strict transform of the curve $P(t,x,y)$
has exactly $d_E$ non-singular branches  (curvettes) intersecting $E$ 
and is called  \emph{an atypical value for $P(x,y,T)$ at $E$}
otherwise.

If $t\in  \mathbb{K}$ is a  typical value
for $P(x,y,T)$ at all dicritical divisors  $E$
then $t\in  \mathbb{K}$ will be  called \emph{a typical value}
for $P(x,y,T)$, and \emph{an atypical} one otherwise.
\end{dfn}

\begin{ejm}\label{ej:4}
In Figure~\ref{fig:newtonpxz2b}, we have the Newton polygon of the unique dicritical edge
for $p_x$ in Example~\ref{ej:1}. If we fix $t=t_{0,E}=-\frac{5}{8}$, the vertex $(0,3)$
disappears. The corresponding Newton polygon is in Figure~\ref{fig:newtonpxz3}.
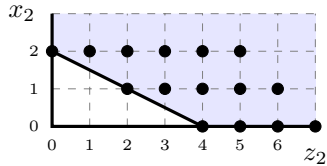
\begin{figure}[ht]
\begin{center}
\begin{tikzpicture}[scale=.5,xscale=1,vertice/.style={draw,circle,fill,minimum size=0.15cm,inner sep=0}]
\draw[fill, color=blue!10!white] (0,3)--(0,2)--(4,0)--(7,0)--(7,3);
\draw[help lines,step=1,dashed] (0,0) grid (7,3);
\draw[very thick] (0,3)--(0,0)--(7,0);
\foreach \x/\xtext in {0,...,2}
 \node[anchor=east] at (-.1,\x) {\tiny{\x}};
\foreach \y in {0,1,...,6}
 \node[anchor=north] at (\y,-.1) {\tiny{\y}};
\node at (-.8,3) {$x_2$};
\node at (7,-.8) {$z_2$};
\draw[,very thick] (0,2)--(2,1)--(4,0);
\foreach \x/\xtext in {(7, 0), (6, 1), (5, 2), (6, 0), (5, 1), (4, 2), (5, 0), (4, 1), (3, 2),
(4, 0), (3, 1), (2, 2), (2, 1), (1, 2), (0, 2)}
\node[vertice] at \x {};
\end{tikzpicture}
\caption{Final Newton polygon for the special fiber}
\label{fig:newtonpxz3}
\end{center}
\end{figure}
Since the general fiber is an ordinary cusp and for $t_{0,E}$ we have a tacnode,
we conclude that this value is atypical at~$E$.
\end{ejm}

\begin{obs}
In $\chr(\mathbb{K})=0$ this definition is equivalent to the standard definition, see,
for instance, the first definition 
in \cite[Section~3]{lw:97}. Note that the cases 
$(i)$ and $(iii)$ in that definition are not possible for special pencils:
$(i)$ in this case is only valid for $t_0=q_E(0)$ and $(iii)$ is not possible because
the first time ones gets  a dicritical divisor the linear system has no base points.  
\end{obs}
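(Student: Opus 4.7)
The plan is to dispatch the two independent assertions packed into the remark: first, that in characteristic zero Definition~\ref{dfn:atypical} agrees with the first definition of atypical value given in \cite[Section~3]{lw:97}; and second, that two of the three cases distinguished by that classical definition cannot occur for a special pencil, case~(i) being possible only at $t_0 = q_E(0)$ and case~(iii) never occurring at all.

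For the equivalence of definitions, I would line the two formulations up term by term. The classical definition in \cite{lw:97} declares $t$ typical precisely when, on a log resolution of the pencil, the strict transform of the fibre meets each dicritical component $E$ in the expected configuration, namely in $d_E$ reduced, pairwise distinct, transversal, smooth curvettes. This is literally the content of Definition~\ref{dfn:atypical}, so once the standard dictionary between topological data on the resolution and the combinatorial/toric data controlling the dicritical edges is in place, the two conditions coincide in characteristic zero (where separability of $q_E$ is automatic).

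For the impossibility of case~(iii), which concerns base points of the pull-back pencil lying along $E$, I would invoke the explicit calculation from the proof of Proposition~\ref{prop:dicpol}: the toric-Newton transformation associated to a dicritical edge exhibits $\pi^*(r)|_E$ as the polynomial morphism $q_E \colon E \to \mathbb{P}^1$. Being everywhere defined on $E$, this morphism has no indeterminacy points, so $\pi^*(r)$ has no base points on $E$ the first time a dicritical divisor is produced, ruling out case~(iii). For case~(i), which detects a fibre component supported entirely on $E$, the special form $P = p - T x^c U$ forces the only value of $t$ capable of producing such a component to be the one at which the polynomial $q_E(z) - t$ vanishes at $z = 0$, i.e.\ $t = q_E(0) = t_{0,E}$; this is immediate from the explicit restriction formula $\pi^*(r)|_E = q_\ell(1,y_1) - T$ derived in the proof of Proposition~\ref{prop:dicpol}.

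The principal obstacle is setting up the dictionary between the topological criteria of \cite{lw:97} and the combinatorial/toric language of the present paper carefully enough that the three-way case distinction on one side is matched correctly with the curvette-counting condition on the other. Once that translation is fixed, both impossibility statements follow painlessly from the local normal form of $\pi^*(r)$ near a dicritical divisor established in Propositions~\ref{prop:dicpol} and~\ref{prop:typical}, and no further local computation is required.
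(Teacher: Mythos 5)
Your treatment of case (iii) is correct and matches the remark's own reasoning: the explicit computation in the proof of Proposition~\ref{prop:dicpol} exhibits $\pi^*(r)|_E$ as the everywhere-defined polynomial map $z \mapsto q_E(z)$, so there are no indeterminacy points on $E$.

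The gap is in your interpretation of case (i). You describe it as ``detect[ing] a fibre component supported entirely on $E$,'' but no fibre component of a special pencil can ever be supported on a dicritical divisor: $E$ being dicritical means precisely that $\pi^*(r)|_E$ is non-constant, so $E$ cannot lie in a fibre $\{\pi^*(r)=t\}$. If your interpretation were right, case (i) would be impossible outright, contradicting the remark's claim that it \emph{does} occur for $t_0 = q_E(0)$. The correct reading of case (i) in \cite{lw:97} concerns the strict transform of $C_t$ passing through a distinguished point of the dicritical, namely the intersection of $E$ with an adjacent exceptional component; in the toric chart $(x_1,y_1)$ produced by the dicritical edge, this is the point $y_1 = 0$ on $E = \{x_1=0\}$. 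Your own calculation — $\pi^*(r)|_E = q_\ell(1, y_1)$, so $C_t$ meets $E$ at $y_1=0$ iff $q_E(0)=t$ — is exactly the right argument for \emph{that} statement; it simply has nothing to do with a fibre component lying on $E$. So the computation you cite is the correct mechanism, but the case description it is meant to dispatch is not the one you wrote down.

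Separately, your discussion of the equivalence with Lê--Weber's definition remains at the level of a plan (``once the standard dictionary $\dots$ is in place''), which is in the spirit of the remark but not a proof; the only substantive ingredient you name, automaticity of separability in characteristic zero, is correct.
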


We are going to prove a sort of reciprocal of Proposition~\ref{prop:typical}.

\begin{thm}\label{thm:atypical}
Let $P(x,y,T)$ be a special pencil as in Proposition~\emph{\ref{prop:typical}}. Then
\begin{enumerate}
\enet{\textup{(\arabic{enumi})}}
\item\label{thm:atypical1} If $t\in A_E^*$ then $t$ is an atypical value for $P(x,y,T)$ at $E$.
\item\label{thm:atypical2} If $n>1$ ($n$ the v-ratio) then $t_{0,E}$ is atypical at $E$ 
regardless the value of $q_\ell'(t_{0,E})$.
\end{enumerate}
\end{thm}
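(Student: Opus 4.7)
The plan is to exploit the Newton polygon description of the strict transform of the specialized fibre. For part~\ref{thm:atypical1} the key is that a multiple root of $q_E(z)-t$ forces a local multiplicity $\ge 2$ branch of $\tilde C_t$ meeting $E$, spoiling the curvette condition. For part~\ref{thm:atypical2} the mechanism is subtler: at $T=t_{0,E}$ the coefficient of the dicritical vertex $(c,0)$ vanishes, the $y_1$-valuation of the toric pullback jumps, and under the hypothesis $n>1$ (which forces $a\ge 1$ in the Bezout identity) this jump absorbs a would-be curvette into the exceptional divisor $\{y_1=0\}$.

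For part~\ref{thm:atypical1}, choose $\alpha\in\mathbb{K}$ with $q_E'(\alpha)=0$ and $q_E(\alpha)=t$. By separability, $\alpha$ is a root of $q_E(z)-t$ of multiplicity $m\ge 2$, so the quasi-homogeneous polynomial on the dicritical edge $\ell$, specialized at $T=t$, carries the factor $(y_1-\alpha)^m$. Composing the toric map $\varphi_M$ with the translation $y_1=\bar y_1+\alpha$ and reproducing the computation in the proof of Proposition~\ref{prop:toric-special}, the specialized strict transform at $(0,\alpha)\in E$ has $\bar y_1$-order exactly $m$. Hence the local multiplicity of $\tilde C_t$ at this point is at least $2$, which rules out $m$ distinct non-singular curvettes meeting $E$ transversally at $m$ distinct points; Definition~\ref{dfn:atypical} fails, and $t$ is atypical.

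For part~\ref{thm:atypical2}, first reduce to $t_{0,E}\notin A_E^*$ via part~\ref{thm:atypical1}. Then $q_E'(0)=a_{d_E-1}\ne 0$ and
\begin{equation*}
q_E(y_1)-t_{0,E}=y_1\,\tilde q(y_1),\qquad \tilde q(0)=a_{d_E-1}\ne 0.
\end{equation*}
Because $n>1$, the Bezout identity $bn-am=1$ forces $a\ge 1$, so $\{y_1=0\}$ is collapsed by $\varphi_M$ and is hence an exceptional divisor of the chart. At $T=t_{0,E}=a_{d_E}$ the coefficient at $(c,0)$ in $P$ vanishes; a short linear-programming argument using $bn-am=1$ shows that among the points of $\supp(P)$ lying on or above $\ell$, the vertex $(c,0)$ was the \emph{only} one realizing the generic $y_1$-valuation $\tilde v=ac$ of the pullback, so after specialization the $y_1$-valuation jumps to $\tilde v'=ac+1$ (attained at $(c-m,n)\in\ell$). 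Dividing the specialized pullback by $x_1^\omega y_1^{ac+1}$ yields a strict transform of the form $\tilde q(y_1)+x_1\,g(x_1,y_1)$, whose value at $(0,0)$ equals $\tilde q(0)\ne 0$; hence $\tilde C_{t_{0,E}}$ does not pass through the point of $E$ corresponding to $\alpha=0$. The ``missing'' intersection has been absorbed into the exceptional $\{y_1=0\}$ (whose multiplicity in the pullback jumped by one), so only $d_E-1$ non-singular transversal curvettes of $E$ survive and $t_{0,E}$ is atypical.

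The delicate point is pinpointing why the hypothesis $n>1$ matters: it is precisely this hypothesis that forces $a\ge 1$ and hence promotes $\{y_1=0\}$ from the strict transform of a coordinate axis to a genuine exceptional divisor, so that the valuation jump costs a curvette. In the $n=1$ case treated in Proposition~\ref{prop:typical}~(2) one is free to use a toric map with $a=0$ (essentially the standard blow-up), $\{y_1=0\}$ is not exceptional, and the would-be curvette reappears as an honest transversal branch. Verifying the valuation jump and correctly tracking where the lost intersection goes will be the main obstacle.
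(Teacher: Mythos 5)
Your part~\ref{thm:atypical1} argument coincides with the paper's: a common root $\alpha$ of $q_E-t$ and $q_E'$ is a multiple root (this needs no invocation of separability, only the very definitions), the coefficient $b_{d_E-1}$ in \eqref{eq:curvetas} vanishes, and the strict transform has $\bar y_1$-order $\ge 2$ at $(0,\alpha)$, so the intersection with $E$ there is not transverse. Same mechanism, same computation.

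For part~\ref{thm:atypical2} you take a genuinely different route. The paper's proof is a single sentence: once the vertex $(c,0)$ drops out of $\supp(P(x,y,t_{0,E}))$, the Newton polygon acquires a new bottom edge not parallel to $\ell$ (see Figure~\ref{fig:bambu}), whose branches go down the bamboo rather than to $E$ --- a statement in the \emph{original} coordinates. You instead work in the toric chart $(x_1,y_1)$, compute that $(c,0)$ is the \emph{unique} support point minimizing $ai+bj$ (your ``linear-programming'' step is correct: if $nu+mj\geq 0$ then $au+bj = a(nu+mj)/n + j/n \cdot(bn-am)\cdot\dots$, or directly $n(au+bj)=a(nu+mj)+j>0$ unless $u=j=0$), deduce the $y_1$-valuation jump, and exhibit the strict transform $\tilde q(y_1)+x_1 g$ which does not vanish at $(0,0)$. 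Both routes reach the same conclusion; yours is more explicit about \emph{why} exactly one curvette is lost, at the price of a chart-dependent computation. Your parenthetical that the new minimum is ``attained at $(c-m,n)\in\ell$'' is not justified (that point need not be in $\supp(P)$), but it is also unnecessary: the exactness of the jump to $ac+1$ follows, as you in fact use, from $\tilde q(0)=q_E'(0)\ne 0$.

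One point merits more care than your sketch gives it. You say ``$n>1$ forces $a\ge 1$'' and, for $n=1$, that ``one is free to use a toric map with $a=0$.'' But the paper's convention \eqref{eq:bezout} imposes $a,b\in\mathbb{Z}_{>0}$ for \emph{all} $n$, so with the paper's $\varphi_M$ one has $a\ge 1$ even when $n=1$, and your valuation computation would then apparently show that the strict transform avoids $(0,0)$ in the $n=1$ case too --- seemingly contradicting Proposition~\ref{prop:typical}(2). The resolution is that for $n=1$ with $a\ge 1$ the divisor $\{y_1=0\}$ is an unnecessary extra blow-up (the ray $(a,b)$ is not needed to desingularize the cone $\langle (1,m),(0,1)\rangle$, which is already unimodular), and the ``lost'' branch reappears as a curvette of $E$ once that superfluous divisor is contracted. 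For $n>1$ no chart with $a=0$ exists, the cone $\langle(n,m),(0,1)\rangle$ genuinely needs subdividing, and $\{y_1=0\}$ is a bona fide bamboo divisor, so the branch really does leave~$E$. You gesture at exactly this, but to make the argument watertight you should state explicitly that the count in Definition~\ref{dfn:atypical} is taken with respect to the resolution and that $\{y_1=0\}$ is, for $n>1$, a component of the exceptional set of that resolution rather than an artifact of the chosen chart.
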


\begin{obs}
From the interpretation of dicriticals of L{\^e}-Weber, the case $n>1$ corresponds exactly with
the dicriticals which admit a bamboo, see~\cite{lw:97}, which will be called \emph{dicriticals with bamboo}.
\end{obs}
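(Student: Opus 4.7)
The plan is to match the toric-Newton resolution produced in \autoref{phim} with the Lê--Weber picture of the dicritical resolution, and to read off the bamboo directly from the v-ratio $n=n_\ell$ of the dicritical edge $\ell$. The key idea is that $\varphi_M$ is not a single blow-up but a whole sequence of quadratic transformations, and the length of that sequence is controlled exactly by how far $n$ is from $1$.

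First I would recall the Lê--Weber bamboo from~\cite{lw:97}: at a dicritical component $E$ it is a maximal chain of rational exceptional curves, one of whose endpoints is $E$ and whose other endpoint meets the remaining exceptional divisors; equivalently, it is the chain produced by the Hirzebruch--Jung continued fraction expansion of the weight pair defining the toric chart where $E$ appears. Next I would decompose the monomial map $\varphi_M(x_1,y_1)=(x_1^{n}y_1^{a},x_1^{m}y_1^{b})$ of \autoref{phim} into ordinary quadratic transformations. Because $\gcd(n,m)=1$ and $bn-am=1$, this decomposition is governed by the regular subdivision of the cone generated by $(1,0)$ and $(n,m)$ in $\mathbb{Z}^{2}$, equivalently by the Euclidean algorithm on $(n,m)$; the dicritical $E$ corresponds, in the toric chart of $\varphi_M$, to the curve $\{x_1=0\}$, i.e.\ to the ray $(n,m)$.

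Then I would argue the two implications separately. If $n=1$ the ray $(1,m)$ is already adjacent to $(1,0)$ in a regular fan, so no intermediate ray is needed: $E$ is produced by a chain of $m$ blow-ups all along the strict transform of $\{x=0\}$, and no chain of rational curves lies between $E$ and the previous piece of the exceptional configuration, so $E$ has no bamboo. If $n>1$, the Euclidean algorithm on $(n,m)$ produces intermediate rays and, correspondingly, a non-trivial chain of rational exceptional curves $E_{1},\dots,E_{k}$ joining $E$ to the previously created divisors; this chain is precisely the Hirzebruch--Jung resolution of the quotient singularity $\frac{1}{n}(1,-m)$ hidden in the toric chart, which is exactly the Lê--Weber bamboo at $E$.

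The main obstacle, as I see it, is the bookkeeping needed to make sure the chain produced by $\varphi_M$ is really the whole bamboo at $E$ and not only a piece of it. One has to check that the subsequent toric-Newton transformations in the recursive process, which are performed at the points $\bar y_1=\alpha_j$ of $E$, are all localized on the chart $E=\{x_1=0\}$ and therefore do not touch the curves $E_{i}$ that lie on the other toric charts between $E$ and the rest of the tree; and, conversely, that no earlier transformation in the tree of Newton polygons can create additional exceptional components on the bamboo of $E$. Once these localization statements are verified, the correspondence \emph{bamboo at }$E\ \Longleftrightarrow\ n_\ell>1$ follows.
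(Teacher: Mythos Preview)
The paper does not prove this statement: it is a one-line remark with only a reference to~\cite{lw:97}, so there is no proof in the paper to compare against. Your outline is the natural argument and is essentially correct, but there is a swap of axes in your case analysis that you should fix.

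With the paper's conventions the divisor $E$ corresponds to the ray $(n,m)$, the ray $(1,0)$ corresponds to $\{x=0\}$, and $(0,1)$ to $\{y=0\}$. The bamboo lies on the $(0,1)$ side of $(n,m)$, not the $(1,0)$ side: the cone $\langle (n,m),(0,1)\rangle$ has index~$n$, whereas $\langle (1,0),(n,m)\rangle$ has index~$m$. Hence your sentence ``the ray $(1,m)$ is already adjacent to $(1,0)$'' is false---that determinant is~$m$---and the correct assertion is that $(1,m)$ is adjacent to $(0,1)$; likewise the blow-ups reaching $E$ for $n=1$ are performed along the strict transform of $\{y=0\}$, not $\{x=0\}$, since each successive ray $(1,k+1)=(1,k)+(0,1)$ comes from blowing up the intersection with $\{y=0\}$. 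Geometrically, the dicritical edge is the \emph{bottom} edge of the Newton polygon, with vertex $(c,0)$ on the $x$-axis; the dead-end side of $E$, where no further edge of the polygon, no strict transform of a fibre, and no later toric-Newton step lives, is the one pointing towards that $x$-axis in monomial space, which is the $(0,1)$ direction in the fan. Once you place the bamboo on that side your argument goes through: for $n=1$ the cone $\langle (1,m),(0,1)\rangle$ is already regular and the bamboo is empty, while for $n>1$ the Hirzebruch--Jung resolution of the $\tfrac{1}{n}$ singularity in that cone is a nonempty chain, which is exactly the bamboo of~\cite{lw:97}. Your closing localization check (that the translations at $\bar y_1=\alpha_j\neq 0$ do not touch the $y_1=0$ end of~$E$) is then the right way to finish.
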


\begin{proof} 
For the proof of \ref{thm:atypical1}, we follow the ideas in Proposition \ref{prop:typical}.
Let $\alpha_i$ be a multiple root of $q_E(s)-t$. In \eqref{eq:curvetas}, the condition
$b_{d_E-1}\neq 0$ fails and the corresponding point cannot be a curvette.

For \ref{thm:atypical2}, 
the Newton polygon of $P(x,y,t_{0,E})$ has a bottom edge which is non parallel to~$\ell$
and of height $n>1$, so there are some branches of this curve which do not meet~$E$,
see  Figure~\ref{fig:bambu} for a typical behavior of Newton polygons.
\end{proof}

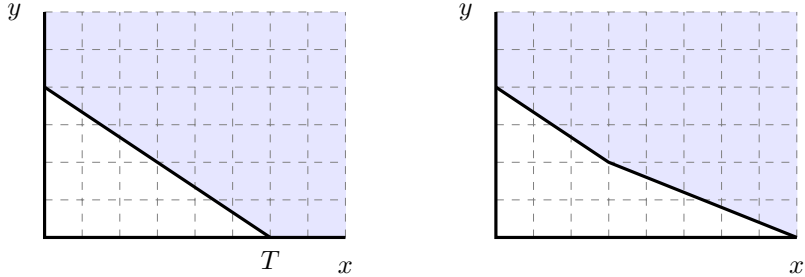
\begin{figure}[ht]
\begin{center}
\begin{tikzpicture}[scale=.5,xscale=1,vertice/.style={draw,circle,fill,minimum size=0.15cm,inner sep=0}]
\draw[fill, color=blue!10!white] (0,6)--(0,4)--(6,0)--(8,0)--(8,6);
\draw[help lines,step=1,dashed] (0,0) grid (8,6);
\draw[very thick] (0,6)--(0,0)--(8,0);
\node[below] at (6,-.1) {$T$};
\node at (-.8,6) {$y$};
\node at (8,-.8) {$x$};
\draw[,very thick] (0,4)--(3,2)--(6,0);
\begin{scope}[xshift=12cm]
\draw[fill, color=blue!10!white] (0,6)--(0,4)--(3,2)--(8,0)--(8,6);
\draw[help lines,step=1,dashed] (0,0) grid (8,6);
\draw[very thick] (0,6)--(0,0)--(8,0);
\node at (-.8,6) {$y$};
\node at (8,-.8) {$x$};
\draw[,very thick] (0,4)--(3,2)--(8,0);
\end{scope}
\end{tikzpicture}
\caption{Left-hand-side polygon for generic~$T$, right-hand-side for $t_{0,E}$.}
\label{fig:bambu}
\end{center}
\end{figure}

\begin{ejm} Let us describe some examples.

\begin{enumerate}
\item Consider the special pencil  $P(x,y,T)=y^4+y^2x^3+yx^7+x^{12}-Tx^6$,
see $\np(P)$ in Figure~\ref{fig:ej1}. 
The edge $\ell=[(0,4),(6,0)]$ is a dicritical edge such that
$P_\ell(x,y)= y^4+y^2x^3-Tx^6$,  $q_E(z)=z^2+z-T$ and $q_E(z)$ is separable.
Since the v-ratio $n$ equals $2>1$, $t=0$ is an atypical value, see its Newton polygon
in Figure~\ref{fig:ej2}.
On the other side $-\frac{1}{2}$ is the only root of $q_\ell'$ and then $t=-\frac{1}{4}$
is the other atypical value at $E$, see the Newton polygon 
after the toric-Newton transformation in Figure~\ref{fig:ej3}.
In this case a generic fibre has two branches at~$E$
while there are $3$ branches for $t=0$ and only one branch for $t=-\frac{1}{4}$. 

\begin{figure}[ht]
\begin{center}
\begin{subfigure}[b]{.3\textwidth}
\begin{tikzpicture}[xscale=.25,yscale=1,vertice/.style={draw,circle,fill,minimum size=0.15cm,inner sep=0}]
\draw[fill, color=blue!10!white] (0,5)--(0,4)--(6,0)--(13,0)--(13,5);
\draw[help lines,step=1,dashed] (0,0) grid (13,5);
\draw[very thick] (13,0)--(0,0)--(0,5);
\foreach \x/\xtext in {0,...,4}
 \node[anchor=east] at (-.1,\x) {\tiny{\x}};
\foreach \y in {0,1,...,12}
 \node[anchor=north] at (\y,-.1) {\tiny{\y}};
\node[below] at (6,-.3) {$T$};
\node at (-.8,5) {$y$};
\node at (13,-.2) {$x$};
\draw[,very thick] (0,4)--(6,0);
\foreach \x/\xtext in {(0,4), (3,2), (7, 1), (12, 0), (6,0)}
\node[vertice] at \x {};
\end{tikzpicture}
\caption{}
\label{fig:ej1}
\end{subfigure}
\begin{subfigure}[b]{.3\textwidth}
\begin{tikzpicture}[xscale=.25,yscale=1,vertice/.style={draw,circle,fill,minimum size=0.15cm,inner sep=0}]
\draw[fill, color=blue!10!white] (0,5)--(0,4)--(3,2)--(7,1)--(12,0)--(13,0)--(13,5);
\draw[help lines,step=1,dashed] (0,0) grid (13,5);
\draw[very thick] (13,0)--(0,0)--(0,5);
\foreach \x/\xtext in {0,...,4}
 \node[anchor=east] at (-.1,\x) {\tiny{\x}};
\foreach \y in {0,1,...,12}
 \node[anchor=north] at (\y,-.1) {\tiny{\y}};
\node[below] at (6,-.3) {\phantom{$T$}};
\node at (-.8,5) {$y$};
\node at (13,-.2) {$x$};
\draw[very thick] (0,4)--(3,2)--(7,1)--(12,0);
\draw[dashed] (3,2)--(12,0);
\foreach \x/\xtext in {(0,4), (3,2), (7, 1), (12, 0)}
\node[vertice] at \x {};
\end{tikzpicture}
\caption{}
\label{fig:ej2}
\end{subfigure}
\begin{subfigure}[b]{.3\textwidth}
\begin{tikzpicture}[xscale=.25,yscale=1,vertice/.style={draw,circle,fill,minimum size=0.15cm,inner sep=0}]
\draw[fill, color=blue!10!white] (0,6)--(0,2)--(5,0)--(13,0)--(13,6);
\draw[help lines,step=1,dashed] (0,0) grid (13,6);
\draw[very thick] (13,0)--(0,0)--(0,6);
\foreach \x/\xtext in {0,...,5}
 \node[anchor=east] at (-.1,\x) {\tiny{\x}};
\foreach \y in {0,1,...,12}
 \node[anchor=north] at (\y,-.1) {\tiny{\y}};
\node[below] at (6,-.3) {\phantom{$T$}};
\node at (-.5,6) {$y$};
\node at (13,-.2) {$x$};
\draw[,very thick] (0,2)--(5,0);
\foreach \x/\xtext in {(12, 6), (12, 5), (12, 4), (12, 3), (12, 2), (12, 1), (12, 0), (5, 3),
(5, 2), (5, 1), (5, 0), (0, 2)}
\node[vertice] at \x {};
\end{tikzpicture}
\caption{}
\label{fig:ej3}
\end{subfigure}
\caption{}
\label{fig:ej}
\end{center}
\end{figure}

\item  For the special pencil  $P(x,y,T)=y^3+y^2x-x^4-T x^3$
the edge $\ell=[(0,3),(3,0)]$ is dicritical and  $q_E(z)=z^3+z^2-T$,
see $\np(P)$ in Figure~\ref{fig:eja1}.
The derivative has two roots $0,-\frac{2}{3}$, and then $0,\frac{4}{27}$
are the atypical values. Since the v-ratio is~$1$,  $t=0$ is atypical only for being
a critical value of $q_E$, see its Newton polygon in Figure~\ref{fig:eja2}.
In order to study the fiber for $t=\frac{4}{27}$, we can check that
the quasi-homogeneous polynomial has one simple root and one double root.
It is enough to study what happens on the double root; instead of the toric-Newton
transformation we can do the change $y=y_1-\frac{2}{3}x$, and we obtain again
the Newton polygon of Figure~\ref{fig:eja2}.
All the typical fibres have 3 branches while the atypical ones have $2$  branches.

\begin{figure}[ht]
\begin{center}
\begin{subfigure}[b]{.45\textwidth}
\begin{center}
\begin{tikzpicture}[xscale=.5,yscale=.5,vertice/.style={draw,circle,fill,minimum size=0.15cm,inner sep=0}]
\draw[fill, color=blue!10!white] (0,4)--(0,3)--(3,0)--(5,0)--(5,4);
\draw[help lines,step=1,dashed] (0,0) grid (5,4);
\draw[very thick] (5,0)--(0,0)--(0,4);
\foreach \x/\xtext in {0,...,3}
 \node[anchor=east] at (-.1,\x) {\tiny{\x}};
\foreach \y in {0,1,...,4}
 \node[anchor=north] at (\y,-.1) {\tiny{\y}};
\node[below] at (3,-.6) {$T$};
\node at (-.5,4) {$y$};
\node at (5,-.5) {$x$};
\draw[,very thick] (0,3)--(3,0);
\foreach \x/\xtext in {(4, 0), (3, 0), (1, 2), (0, 3)}
\node[vertice] at \x {};
\end{tikzpicture}
\caption{}
\label{fig:eja1}
\end{center}
\end{subfigure}
\begin{subfigure}[b]{.45\textwidth}
\begin{center}
\begin{tikzpicture}[xscale=.5,yscale=.5,vertice/.style={draw,circle,fill,minimum size=0.15cm,inner sep=0}]
\draw[fill, color=blue!10!white] (0,4)--(0,3)--(1,2)--(4,0)--(5,0)--(5,4);
\draw[help lines,step=1,dashed] (0,0) grid (5,4);
\draw[very thick] (5,0)--(0,0)--(0,4);
\foreach \x/\xtext in {0,...,3}
 \node[anchor=east] at (-.1,\x) {\tiny{\x}};
\foreach \y in {0,1,...,4}
 \node[anchor=north] at (\y,-.1) {\tiny{\y}};
\node[below] at (3,-.6) {\phantom{$T$}};
\node at (-.5,4) {$y$};
\node at (5,-.5) {$x$};
\draw[,very thick] (0,3)--(1,2)--(4,0);
\foreach \x/\xtext in {(4, 0), (1, 2), (0, 3)}
\node[vertice] at \x {};
\end{tikzpicture}
\caption{}
\label{fig:eja2}
\end{center}
\end{subfigure}
\caption{}
\label{fig:eja}
\end{center}
\end{figure}

\item For the special pencil  $P(x,y,T)=y^3+yx^2-x^4-T x^3$, the value $t=0$ is typical
at the unique dicritical, even if the Newton polygons do not coincide, see Figure~\ref{fig:ejb}.    

\begin{figure}[ht]
\begin{center}
\begin{subfigure}[b]{.45\textwidth}
\begin{center}
\begin{tikzpicture}[xscale=.5,yscale=.5,vertice/.style={draw,circle,fill,minimum size=0.15cm,inner sep=0}]
\draw[fill, color=blue!10!white] (0,4)--(0,3)--(3,0)--(5,0)--(5,4);
\draw[help lines,step=1,dashed] (0,0) grid (5,4);
\draw[very thick] (5,0)--(0,0)--(0,4);
\foreach \x/\xtext in {0,...,3}
 \node[anchor=east] at (-.1,\x) {\tiny{\x}};
\foreach \y in {0,1,...,4}
 \node[anchor=north] at (\y,-.1) {\tiny{\y}};
\node[below] at (3,-.6) {$T$};
\node at (-.5,4) {$y$};
\node at (5,-.5) {$x$};
\draw[,very thick] (0,3)--(3,0);
\foreach \x/\xtext in {(4, 0), (3, 0), (2,1), (0, 3)}
\node[vertice] at \x {};
\end{tikzpicture}\caption{}
\label{fig:ejb1}
\end{center}
\end{subfigure}
\begin{subfigure}[b]{.45\textwidth}
\begin{center}
\begin{tikzpicture}[xscale=.5,yscale=.5,vertice/.style={draw,circle,fill,minimum size=0.15cm,inner sep=0}]
\draw[fill, color=blue!10!white] (0,4)--(0,3)--(2,1)--(4,0)--(5,0)--(5,4);
\draw[help lines,step=1,dashed] (0,0) grid (5,4);
\draw[very thick] (5,0)--(0,0)--(0,4);
\foreach \x/\xtext in {0,...,3}
 \node[anchor=east] at (-.1,\x) {\tiny{\x}};
\foreach \y in {0,1,...,4}
 \node[anchor=north] at (\y,-.1) {\tiny{\y}};
\node[below] at (3,-.6) {\phantom{$T$}};
\node at (-.5,4) {$y$};
\node at (5,-.5) {$x$};
\draw[,very thick] (0,3)--(2,1)--(4,0);
\foreach \x/\xtext in {(4, 0),  (2,1), (0, 3)}
\node[vertice] at \x {};
\end{tikzpicture}
\caption{}
\label{fig:ejb2}
\end{center}
\end{subfigure}
\caption{}
\label{fig:ejb}
\end{center}
\end{figure}

\end{enumerate}

\end{ejm}

\begin{obs}\label{obs:carp}
Note that Proposition \ref{prop:typical} and Theorem \ref{thm:atypical}
gives a complete characterization of atypical values of a special pencil in terms of 
the polynomials $q_E(z)$ if they are separable. 
In the inseparable case the atypical values cannot be computed  just from
$q_E$ as the following examples, in $\chr(\mathbb{K})=p$, show:

\begin{enumerate}
\enet{\rm(\alph{enumi})}
\item\label{nsep-a} $y^p+x^{p+1}-T x^{p}$, $t=0$ behaves as the other values of~$\mathbb{K}$.

\item\label{nsep-b} $y^p+y^2x^{p-1}+T x^p$, $t=0$ does not behave as the other values of~$\mathbb{K}$. 

\end{enumerate}

In both cases the generic members of the pencil have the singularity type of 
$y^p+x^{p+1}$. In particular it is not a curvette, as curvettes are smooth, 
and following our definition all values would
be atypical. A natural extension of our definition to the non-separable case
would imply that $t=0$ is typical for \ref{nsep-a} and atypical for \ref{nsep-b}.
See \cite{mctcw:01} for a more complete description of pencils in positive characteristic.
\end{obs}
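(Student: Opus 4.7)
The plan is to verify the two examples by direct Newton-polygon computation, exploiting the Frobenius identity $(u+v)^p = u^p + v^p$ valid in characteristic~$p$. In both \ref{nsep-a} and \ref{nsep-b} the dicritical edge of the Newton polygon is $\ell = [(0,p),(p,0)]$ (the extra vertex $(p+1,0)$ in the first example and $(p-1,2)$ in the second both lie above~$\ell$), and the edge polynomial is $q_E(z) = z^p \mp T$, a $p$-th power, so $q_E'(z) \equiv 0$ and Theorem~\ref{thm:atypical} does not apply. The qualitative behavior near~$E$ must therefore be extracted by substituting explicit values of~$t$ and inspecting the resulting strict transforms.

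For \ref{nsep-a}, using the substitution $y = \bar y + t^{1/p} x$ (available since $\mathbb{K}$ is algebraically closed) together with Frobenius, one gets
\[
P(x,\bar y + t^{1/p} x, t) = \bar y^p + t x^p + x^{p+1} - t x^p = \bar y^p + x^{p+1}.
\]
Thus every fiber, including $t = 0$, is isomorphic to the germ $y^p + x^{p+1} = 0$; this is a single irreducible (non-smooth) branch, and the family is equisingular over all of~$\mathbb{K}$, so $t=0$ behaves like every other value. This also identifies the claimed singularity type of the generic member.

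For \ref{nsep-b} the analogous substitution $y = \bar y - t^{1/p} x$ yields
\[
P(x,\bar y - t^{1/p} x, t) = \bar y^p + \bar y^2 x^{p-1} - 2 t^{1/p} \bar y x^p + t^{2/p} x^{p+1}.
\]
For $t \neq 0$ the Newton polygon of this expression collapses to the single lower edge $[(0,p),(p+1,0)]$ with leading polynomial $\bar y^p + t^{2/p} x^{p+1}$, which is irreducible since $\gcd(p,p+1) = 1$, giving one branch of type $y^p + x^{p+1}$ as claimed. At $t=0$, however, $P(x,y,0) = y^2(y^{p-2} + x^{p-1})$ is \emph{reducible}, with the doubled line $y=0$ together with the irreducible branch $y^{p-2} + x^{p-1} = 0$ (again irreducible since $\gcd(p-2,p-1)=1$); the Newton polygon, the number of branches and the whole local topology differ from the generic fiber, so $t=0$ does not behave as the other values. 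The contrast between \ref{nsep-a} and \ref{nsep-b}, both producing the same $q_E$ up to sign but opposite behaviors at $t=0$, confirms the remark that in the inseparable case atypical values cannot be read off from $q_E$ alone: one needs the higher-order monomials above~$\ell$ which $q_E$ forgets. There is no real obstacle in the argument; the characteristic-$p$ identities make the substitutions immediate, and the only interpretive subtlety is that, because none of the resulting branches on $E$ are smooth, Definition~\ref{dfn:atypical} taken literally would call every $t$ atypical, which is why a natural extension of that definition is needed to recover the expected dichotomy.
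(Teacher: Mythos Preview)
The paper states this remark without verification; it is offered as a pair of illustrative examples and no proof is given. Your computation supplies the missing details, and the method---substitute $y\mapsto\bar y+\alpha x$ with $\alpha^p=\pm t$ using Frobenius, then read off the Newton polygon of the result---is the natural one and is correct.

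Two small points. First, in case~\ref{nsep-b} with $p=2$ your factorization $P(x,y,0)=y^2(y^{p-2}+x^{p-1})$ becomes $y^2(1+x)$, and $1+x$ is a unit in $\mathbb{K}[[x,y]]$, not a branch; so for $p=2$ the fibre at $t=0$ is simply the doubled line $y^2=0$, not two branches. The conclusion is unaffected---a double smooth line is still not of type $y^2+x^3$---but your branch count as written holds only for $p\geq 3$. Second, what you write as $q_E(z)=z^p\mp T$ is, in the paper's notation, $q_\ell(1,z)$; the paper would set $q_E(z)=z^p$ (so that the fibre over $t$ is $q_E(z)=t$). This is purely notational and does not affect your argument, since in either convention the derivative vanishes identically.
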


In the separable case, we can recover algebraically the results of \cite{gw:13}.
More precisely it is possible to recover the number of atypical fibers only in terms
of the Newton polygons. The type of the atypical fibers needs the part \emph{behind}
the Newton polygons, but for the number, these Newton polygons are enough, compare
with Remark~\ref{obs:carp}.

We would like  to estimate the number of atypical values at a dicritical.
Let us collect the relevant information from the Newton process.
We have $E_1,\dots,E_r$ dicriticals coming from dicritical edges
$\ell^1,\dots,\ell^r$, each one carries a polynomial
$q_i(z):=q_{E_i}(z)$ of degree~$d_i$ and from the weight $\omega_{\ell^i}$
we keep the number $n_i$. The separability hypothesis asserts that
$q_i(z)$ is separable.

\begin{thm}\label{thm:bound}
Let $P(x,y,T)$ be a special pencil satisfying the separability hypothesis.
Let $E$ be a dicritical and let $n$ be its v-ratio. Let $A_E$ be the set $\{q_E(\alpha)\mid q_E'(\alpha)=0\}$.
Then, the set of atypical values for $P(x,y,T)$ at  $E$ is 
$$
\begin{cases}
A_E\cup\{q_E(0)\}&\text{ if }n>1\\
A_E&\text{ if }n=1
\end{cases}
$$
In particular, the number of atypical values for $P(x,y,T)$  at $E$ is at most
$$
M_E:=\#\{\text{non-zero roots of }q_E'\}+1\,,
$$
and the number of atypical values for $P(x,y,T)$ is at most $\sum\noindent_{E \text{dicritical }} M_E$.
\end{thm}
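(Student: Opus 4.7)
The plan is to assemble the theorem as a corollary of Proposition~\ref{prop:typical} and Theorem~\ref{thm:atypical}, together with a short counting argument. First I would give the exact description of the atypical set at a single dicritical $E$, then bound its cardinality by $M_E$ via a case analysis on whether $0$ is a root of $q_E'$, and finally sum over dicriticals.

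For the characterization: fix a dicritical $E$ with associated polynomial $q_E(z)$ and v-ratio~$n$. Theorem~\ref{thm:atypical}(\ref{thm:atypical1}) says every $t\in A_E$ is atypical at $E$, and Theorem~\ref{thm:atypical}(\ref{thm:atypical2}) says that when $n>1$ the value $q_E(0)$ is also atypical at~$E$. Conversely, Proposition~\ref{prop:typical}(1) shows that any $t\notin A_E$ with $t\neq q_E(0)$ is typical at $E$, and Proposition~\ref{prop:typical}(2) shows that when $n=1$ and $q_E(0)\notin A_E$ the value $q_E(0)$ is typical as well. These results, read together, give exactly the dichotomy stated:
\[
\{\text{atypical values at }E\}=
\begin{cases}
A_E\cup\{q_E(0)\}&\text{if }n>1,\\
A_E&\text{if }n=1.
\end{cases}
\]

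For the cardinality bound, write $R:=\{\alpha\in\mathbb{K}\mid q_E'(\alpha)=0\}$ and let $R_*:=R\setminus\{0\}$, so $M_E=\#R_*+1$. Since $A_E=q_E(R)$ we have $\#A_E\leq \#R$. If $n=1$, the atypical set is $A_E$ and $\#A_E\leq \#R\leq \#R_*+1=M_E$. If $n>1$, the atypical set is $A_E\cup\{q_E(0)\}$; when $0\in R$ this equals $A_E$ with cardinality at most $\#R=\#R_*+1=M_E$, while when $0\notin R$ one has $\#A_E\leq \#R_*$ and hence $\#(A_E\cup\{q_E(0)\})\leq \#R_*+1=M_E$. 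In all cases the local bound $M_E$ holds.

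For the global bound, by Definition~\ref{dfn:atypical} a value $t\in\mathbb{K}$ is atypical for $P(x,y,T)$ exactly when it is atypical at some dicritical~$E$, so the atypical set is the union of the local atypical sets and its cardinality is at most $\sum_{E\text{ dicritical}} M_E$. The only delicate bookkeeping is the case split on whether $0\in R$, which determines whether $q_E(0)$ is already contained in $A_E$ and whether the ``$+1$'' in $M_E$ is genuinely needed; once this is handled, the rest of the proof is just a clean assembly of the already-established local results.
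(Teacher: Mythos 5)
Your proof is correct and takes essentially the same route the paper does: it reads the exact description of the atypical set at a dicritical off Proposition~\ref{prop:typical} (for the typical direction) and Theorem~\ref{thm:atypical} (for the atypical direction), then does the straightforward counting. The only difference is bookkeeping: the paper's one-line proof cites only Theorem~\ref{thm:atypical}, but you correctly observe that the converse inclusion (atypical implies in the stated set) really rests on Proposition~\ref{prop:typical}, so your citation is the more complete one.
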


\begin{proof}
This is a direct consequence of Theorem~\ref{thm:atypical}.
\end{proof}

The following result is an easy consequnece of Theorem~\ref{thm:bound}.

\begin{cor}[Gwo{\'z}dziewicz~\cite{gw:13}]\label{cor:gw}
Let $P(x,y,T)$ be a special pencil.
\begin{enumerate}
\enet{\textup{(\arabic{enumi})}}
\item If $E$ is a dicritical divisor of degree~$d_E$, then there are at most $d_E$ atypical values at $E$.
\item If there is a value $t_0$ such that $C_{t_0}^{\text{\rm red}}$ has $r$ branches at a dicritical divisor $E$
then there are at most $r$ atypical values at~$E$ (besides eventually~$t_0$).
\item The number of atypical values of the pencil is bounded by $\min(\nu^{\text{\rm gen}},\nu^{\text{\rm min}}+1)$,
where $\nu^{\text{\rm gen}}$ is the number of branches of the generic value and
$\nu^{\text{\rm min}}$ is the minimal number of branches of the fibers.
\end{enumerate}
\end{cor}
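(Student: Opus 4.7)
\emph{Plan.} All three assertions follow from Theorem~\ref{thm:bound}, i.e.\ from the bound $M_E=\#\{\text{nonzero roots of }q_E'\}+1$ on the atypical values at a dicritical~$E$, combined with elementary degree considerations for the polynomial $q_E(z)$ of degree $d_E$.

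For~(1), the derivative $q_E'(z)$ has degree $d_E-1$ and hence at most $d_E-1$ nonzero roots, so $M_E\leq d_E$. For~(2) I would factor $q_E(z)-t_0=\prod_{i=1}^{r'}(z-\alpha_i)^{m_i}$ with $\sum m_i=d_E$, where $r'$ is the number of distinct roots. The toric-Newton picture of~\S1 identifies the branches of $C_{t_0}^{\text{\rm red}}$ meeting $E$ with the geometric points $(x_1=0,\bar y_1=\alpha_i)$, so $r'\leq r$ (a single root of multiplicity $m_i$ may split into several branches after further Newton steps, but every branch must emanate from one of the $\alpha_i$). Each $\alpha_i$ with $m_i\geq 2$ contributes multiplicity $m_i-1$ to $q_E'$, totalling $d_E-r'$; this leaves at most $(d_E-1)-(d_E-r')=r'-1$ further critical points of $q_E$, and hence at most $r'-1$ critical values distinct from~$t_0$. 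Theorem~\ref{thm:bound} then yields at most $(r'-1)+1=r'\leq r$ atypical values at $E$ other than $t_0$, the extra ``$+1$'' accounting for the candidate $q_E(0)$ that appears when the v-ratio~$n>1$.

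For~(3), Proposition~\ref{prop:typical} gives $\nu^{\text{\rm gen}}=\sum_E d_E$, since at a typical value every dicritical is met by exactly $d_E$ curvettes; summing part~(1) over~$E$ bounds the total number of atypical values by $\sum_E d_E=\nu^{\text{\rm gen}}$. To obtain the bound $\nu^{\text{\rm min}}+1$, pick $t_0$ whose reduced fibre realises $\nu^{\text{\rm min}}$ branches and write $r_E$ for its number of branches at~$E$, so that $\sum_E r_E=\nu^{\text{\rm min}}$; applying part~(2) at each~$E$ gives at most $r_E$ atypical values different from~$t_0$, and summing yields at most $\nu^{\text{\rm min}}$ atypical values distinct from~$t_0$, plus possibly one more for $t_0$ itself.

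\emph{Where the work lies.} The only nontrivial step is the bookkeeping comparison $r'\leq r$ in~(2); everything else reduces to the degree count $\deg q_E'=d_E-1$. A clean way to justify $r'\leq r$ is to note that the strict transform of $C_{t_0}^{\text{\rm red}}$ meets~$E$ at the $r'$ distinct points $(0,\alpha_i)$, each of which carries at least one local branch of the reduced curve, so the global branch count is at least $r'$.
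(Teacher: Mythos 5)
There is a genuine, though repairable, gap in your argument for part (2). You assert that the strict transform of $C_{t_0}^{\text{\rm red}}$ meets $E$ at all $r'$ distinct points $(0,\alpha_i)$, hence $r'\leq r$. This fails exactly when $t_0=q_E(0)$ and the v-ratio $n>1$: by Theorem~\ref{thm:atypical}\ref{thm:atypical2} (the bamboo phenomenon) the root $\alpha_i=0$ does \emph{not} produce a branch of $C_{t_0}^{\text{\rm red}}$ meeting $E$ --- the new bottom edge of $\np(P(x,y,t_0))$ is not parallel to $\ell$ and has height $n>1$, so those branches escape through a chain of non-dicritical divisors. In that case one only has $r\geq r'-1$, and your count $(r'-1)+1=r'$ overshoots $r$.

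The conclusion of (2) is nevertheless correct, and the fix is a case split that makes the two overcounts cancel honestly rather than accidentally. If $t_0\neq q_E(0)$, then every $\alpha_i$ is nonzero, each point $(0,\alpha_i)$ carries at least one branch meeting $E$, so $r'\leq r$; with the $+1$ allowance for $q_E(0)$ one gets at most $r'\leq r$ atypical values other than $t_0$. If $t_0=q_E(0)$, then $q_E(0)$ is $t_0$ itself and cannot contribute an extra atypical value, so the count is at most $r'-1$; and since only the single root $\alpha_i=0$ can fail to meet $E$, one has $r\geq r'-1$, giving $r'-1\leq r$. Parts (1) and (3) and the rest of the bookkeeping in (2) are fine; note also the paper offers no written proof for this corollary (it is stated as an easy consequence of Theorem~\ref{thm:bound}), so the case split above is precisely the step left to the reader.
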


\begin{obs}\label{obs:bound}
In order to reach the bound $\nu^{\text{\rm gen}}$, the following conditions must happen. For every dicritical~$E$,
one has $n>1$, $q_E'(t_{0,E})\neq 0$, $q_E'$ has simple roots, and these roots have distinct values by $q_E$.
Moreover, the sets of atypical values for each dicritical are pairwise disjoint.
\end{obs}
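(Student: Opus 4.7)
The plan is to derive the remark directly from the exact description of the atypical value sets given by Theorem~\ref{thm:bound}, together with the observation that $\nu^{\text{gen}}=\sum_E d_E$, where the sum runs over the dicriticals and $d_E=\deg q_E$ is the number of curvettes of the generic fibre at $E$ (by Proposition~\ref{prop:dicpol}). Since the global set of atypical values of the pencil is the union over the dicriticals of the local atypical sets, reaching $\nu^{\text{gen}}$ forces both each local cardinality to be maximal and the local sets to be pairwise disjoint.

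First I would fix a dicritical $E$ and bound the number of atypical values at $E$. By Theorem~\ref{thm:bound} this number equals $|A_E|$ if $n=1$ and $|A_E\cup\{t_{0,E}\}|$ if $n>1$; in both cases one has the chain
\[
|A_E|\;\leq\;\#\{\text{roots of }q_E'\}\;\leq\;\deg q_E'\;=\;d_E-1.
\]
Consequently the contribution of $E$ is at most $d_E-1$ when $n=1$, which is strictly less than $d_E$; hence $n>1$ is necessary.

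Assuming $n>1$, to obtain $|A_E\cup\{t_{0,E}\}|=d_E$ I would separate the two independent requirements $|A_E|=d_E-1$ and $t_{0,E}\notin A_E$. The first equality forces both inequalities above to be equalities, so $q_E'$ must have $d_E-1$ distinct (hence simple) roots whose images under $q_E$ are pairwise distinct. The second requirement $t_{0,E}\notin A_E$ asserts that $t_{0,E}$ is not a critical value of $q_E$, which is the content of the condition written as $q_E'(t_{0,E})\neq 0$ in the statement.

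Finally, summing the local contributions, the cardinality of $\bigcup_E (A_E\cup\{t_{0,E}\})$ equals $\sum_E d_E=\nu^{\text{gen}}$ if and only if these local sets are pairwise disjoint, giving the last condition. The main obstacle is really notational, namely parsing the shorthand $q_E'(t_{0,E})\neq 0$ as the precise statement $t_{0,E}\notin A_E$; once this identification is made, the remark is a direct bookkeeping consequence of Theorem~\ref{thm:bound}, with no further geometric input required.
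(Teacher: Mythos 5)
Your argument is correct, and it is exactly the bookkeeping that the remark (which the paper leaves without a proof) calls for: decompose $\nu^{\text{\rm gen}}=\sum_E d_E$ and use the explicit description of the atypical sets in Theorem~\ref{thm:bound} to see that reaching the bound forces every intermediate inequality to be an equality. The one place where I would add a word of caution is the identification at the end. You read the paper's condition $q_E'(t_{0,E})\neq 0$ as the statement $t_{0,E}\notin A_E$, i.e.\ that $q_E(0)$ is not a \emph{critical value} of $q_E$; this is indeed the sharp requirement, and together with the other three local conditions it is both necessary and sufficient for the local count $|A_E\cup\{t_{0,E}\}|$ to equal $d_E$. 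If instead one reads the shorthand literally as $q_E'(0)\neq 0$, i.e.\ that $0$ is not a \emph{critical point}, that condition is still necessary (if $q_E'(0)=0$ then $t_{0,E}=q_E(0)\in A_E$ and the local contribution drops to $|A_E|\leq d_E-1$), but it is strictly weaker: one could have $q_E'(0)\neq 0$ while some other critical point $\alpha$ satisfies $q_E(\alpha)=q_E(0)$, and then the bound is still not attained. Since the remark only asserts necessity, the statement is correct under either reading; your interpretation is the one that also yields sufficiency, so it is the better gloss, but it is worth noting that it is an interpretation and not a literal transcription of the displayed inequality. Everything else — the role of Proposition~\ref{prop:dicpol} for $d_E=\deg q_E$, the chain $|A_E|\leq\#\{\text{roots of }q_E'\}\leq\deg q_E'=d_E-1$, and the disjointness at the end — is exactly right.
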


\begin{ejm}\label{ej-bound-sp}
 Let us consider the special pencil 
$$
P(x,y,T)=y^4-2x^2y^2+(y^2-x^2)yx^2+x^7-Tx^4
$$ 
which,
for all $t\in\mathbb{K}$  has 4 branches; the bound proposed in Corollary~\ref{cor:gw},
see~\cite{gw:13}, for the number of atypical values is at most 4.
Let us compute the bound of Theorem~\ref{thm:bound}. 
The unique edge $\ell$  of the Newton polygon is dicritical and for its dicritical~$E$ we have 
$q_E(z)=z^4-2z^2-T$. The roots of   $q_E'(z)$ are $\alpha=0,1,-1$, hence the bound equals~$3$. 
Since $q_E(0)=0$ and $q_E(1)=q_E(-1)=1$, there are exactly two atypical values, $t=0,1$.  
Figure~\ref{fig:ejc1} shows $\np(P(x,y,T))$, while Figure~\ref{fig:ejc2} shows $\np(p(x,y))$.
Note that Figure~\ref{fig:ejc2} shows also $\np(p(x,y\pm x)-1)$.
\end{ejm}

\begin{figure}[ht]
\begin{center}
\begin{subfigure}[b]{.45\textwidth}
\begin{center}
\begin{tikzpicture}[xscale=.5,yscale=.5,vertice/.style={draw,circle,fill,minimum size=0.15cm,inner sep=0}]
\draw[fill, color=blue!10!white] (0,5)--(0,4)--(4,0)--(8,0)--(8,5);
\draw[help lines,step=1,dashed] (0,0) grid (8,5);
\draw[very thick] (8,0)--(0,0)--(0,5);
\foreach \x/\xtext in {0,...,4}
 \node[anchor=east] at (-.1,\x) {\tiny{\x}};
\foreach \y in {0,1,...,7}
 \node[anchor=north] at (\y,-.1) {\tiny{\y}};
\node[below] at (4,-.6) {$T$};
\node at (-.5,5) {$y$};
\node at (8,-.5) {$x$};
\draw[,very thick] (0,4)--(2,2)--(4,0);
\foreach \x/\xtext in {(7, 0), (4, 1), (2, 3), (4, 0), (2, 2), (0, 4)}
\node[vertice] at \x {};
\end{tikzpicture}
\caption{}
\label{fig:ejc1}
\end{center}
\end{subfigure}
\begin{subfigure}[b]{.45\textwidth}
\begin{center}
\begin{tikzpicture}[xscale=.5,yscale=.5,vertice/.style={draw,circle,fill,minimum size=0.15cm,inner sep=0}]
\draw[fill, color=blue!10!white] (0,5)--(0,4)--(2,2)--(4,1)--(7,0)--(8,0)--(8,5);
\draw[help lines,step=1,dashed] (0,0) grid (8,5);
\draw[very thick] (8,0)--(0,0)--(0,5);
\foreach \x/\xtext in {0,...,4}
 \node[anchor=east] at (-.1,\x) {\tiny{\x}};
\foreach \y in {0,1,...,7}
 \node[anchor=north] at (\y,-.1) {\tiny{\y}};
\node[below] at (4,-.6) {\phantom{$T$}};
\node at (-.5,5) {$y$};
\node at (8,-.5) {$x$};
\draw[,very thick] (0,4)--(2,2)--(4,1)--(7,0);
\draw[dashed] (2,2)--(7,0);
\foreach \x/\xtext in {(7, 0), (4, 1), (2, 3), (2, 2),(0,4)}
\node[vertice] at \x {};
\end{tikzpicture}
\caption{}
\label{fig:ejc2}
\end{center}
\end{subfigure}
\caption{}
\label{fig:ejc}
\end{center}
\end{figure}

\section{\texorpdfstring{Factors of a special pencil over $\mathbb{K}(T)$}{Factors of a special pencil over 
K(T)}}
\label{sec-kdt}

Let us interpret a result of~\cite{AL2} in this language, always in the special
case of power series, namely that the
dicritical divisors of $r$ are in one-to-one correspondence with the factors of  $P(x,y,T)$ in $\mathbb{K}(T)[[x,y]]$.

Fix a dicritical edge and keep the notations of Proposition~\ref{prop:dicpol}.

\begin{prop}\label{prop-irr1}
Let $\ell$ be a dicritical edge of the $\np(r)$ corresponding to a dicritical divisor~$E$. Then there
exists an irreducible factor 
$Q_\ell(x,y,T)\in\mathbb{K}(T)[[x]][y]\subset\mathbb{K}(T)[[x,y]]$ 
of an element $P(x,y,T)$ such that 
its weighted initial form for $\omega_\ell$
equals $q_\ell(x^{m_\ell},y^{n_\ell})$.
\end{prop}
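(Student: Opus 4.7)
The plan is to construct $Q_\ell$ as the Newton--Puiseux factor of $P$ grouping the Puiseux roots corresponding to the edge $\ell$, and then to deduce irreducibility from that of the one-variable polynomial $q_\ell(1,s)$. First I would apply Weierstrass preparation to $P\in\mathbb{K}(T)[[x,y]]$: since the $y$-order of $P$ equals $d$, we can write $P=U\cdot W$ with $U\in\mathbb{K}(T)[[x,y]]^*$ and $W\in\mathbb{K}(T)[[x]][y]$ distinguished of degree~$d$, and it suffices to produce the desired irreducible factor of $W$. The key algebraic input is that $q_\ell(1,s)$ is irreducible in $\mathbb{K}(T)[s]$: by the computation in the proof of Proposition~\ref{prop:dicpol}, $q_\ell(1,s)=g(s)-T$ for some $g\in\mathbb{K}[s]$ of degree~$d_\ell$, which is linear and monic in~$T$, hence irreducible in $\mathbb{K}[T,s]$ and, by Gauss's lemma, in $\mathbb{K}(T)[s]$.

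Next I would pass to Newton--Puiseux expansions over $\overline{\mathbb{K}(T)((x))}$. The Puiseux roots of $W$ with $x$-adic valuation equal to $m_\ell/n_\ell$ are precisely the $n_\ell d_\ell$ series $\gamma_{j,k}(x)=\zeta_{j,k}\,x^{m_\ell/n_\ell}+\cdots$, where $\alpha_1,\dots,\alpha_{d_\ell}$ are the roots of $q_\ell(1,s)$ in $\overline{\mathbb{K}(T)}$ and $\zeta_{j,k}^{n_\ell}=\alpha_j$. Set
$$
Q_\ell(x,y,T) := a_0\cdot\prod_{j,k}\bigl(y-\gamma_{j,k}(x)\bigr),
$$
where $a_0\in\mathbb{K}^*$ is the leading coefficient of $q_\ell(1,s)$. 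Valuation being a Galois invariant, the set $\{\gamma_{j,k}\}$ is stable under $\gal(\overline{\mathbb{K}(T)((x))}/\mathbb{K}(T)((x)))$, so $Q_\ell\in\mathbb{K}(T)((x))[y]$; since every $\gamma_{j,k}$ has positive valuation, in fact $Q_\ell\in\mathbb{K}(T)[[x]][y]$. Computing the $\omega_\ell$-initial form factor by factor yields
$$
a_0\prod_j\prod_k\bigl(y-\zeta_{j,k}x^{m_\ell/n_\ell}\bigr)=a_0\prod_j\bigl(y^{n_\ell}-\alpha_j x^{m_\ell}\bigr)=q_\ell(x^{m_\ell},y^{n_\ell}),
$$
as required.

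Finally, I would prove that $Q_\ell$ is irreducible in $\mathbb{K}(T)[[x]][y]$ (equivalently, in $\mathbb{K}(T)[[x,y]]$). Since $Q_\ell$ is, up to the unit~$a_0$, Weierstrass of degree $n_\ell d_\ell$ in~$y$, any factorization $Q_\ell=A\cdot B$ may be taken with $A,B$ Weierstrass of positive $y$-degrees $d_A,d_B$. All Puiseux roots of both $A$ and $B$ have valuation $m_\ell/n_\ell$, which forces $n_\ell\mid d_A,d_B$ and makes the $\omega_\ell$-initial forms $\operatorname{in}(A),\operatorname{in}(B)$ quasi-homogeneous of the form $h_A(x^{m_\ell},y^{n_\ell})$ and $h_B(x^{m_\ell},y^{n_\ell})$ for homogeneous $h_A,h_B\in\mathbb{K}(T)[s_1,s_2]$ of positive degrees $d_A/n_\ell,d_B/n_\ell$. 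The identity $\operatorname{in}(A)\operatorname{in}(B)=q_\ell(x^{m_\ell},y^{n_\ell})$ then yields $h_A(s_1,s_2)h_B(s_1,s_2)=q_\ell(s_1,s_2)$ and, after dehomogenizing, a nontrivial factorization of $q_\ell(1,s)$ in $\mathbb{K}(T)[s]$, contradicting the first paragraph. The main obstacle is this last bookkeeping step: verifying the Galois descent to $\mathbb{K}(T)[[x]][y]$ and that the weighted initial forms of Weierstrass factors of $Q_\ell$ are necessarily of the form $h(x^{m_\ell},y^{n_\ell})$, which is what reduces the geometric irreducibility statement to the purely algebraic irreducibility of $g(s)-T$.
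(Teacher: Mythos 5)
Your proof is correct in characteristic zero but takes a genuinely different route from the paper's. The paper's proof consists of only two steps: Weierstrass preparation, followed by an application of the quasi-homogeneous Hensel's Lemma proved in Appendix~\ref{sec:hensel}, which splits a power series $F$ into $fg$ starting from a coprime factorization $F_{a+b}=f_a g_b$ of its weighted initial form and moreover guarantees that $f$ is irreducible whenever $f_a$ is. Applied to the Weierstrass polynomial $W$ of $P$ with $f_a=q_\ell(x^{m_\ell},y^{n_\ell})$ and $g_b$ the remaining factor $x^u$ (coprime because $q_\ell$ is coprime with $s_1s_2$), this yields $Q_\ell$ directly, with irreducibility inherited from the irreducibility of $q_\ell(x^{m_\ell},y^{n_\ell})$ over $\mathbb{K}(T)$, which in turn comes from the same observation you make: for a dicritical edge, $q_\ell(1,s)$ is monic linear in~$T$. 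In contrast, you construct $Q_\ell$ by grouping Newton--Puiseux roots of valuation $m_\ell/n_\ell$ over $\overline{\mathbb{K}(T)((x))}$ and then descend by Galois invariance, with irreducibility following from a bookkeeping argument on initial forms of hypothetical Weierstrass factors.

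The substantive difference is robustness with respect to the characteristic. The paper is written so that the result holds over arbitrary (algebraically closed) fields, including positive characteristic with only a mild separability hypothesis on $q_E$, and the quasi-homogeneous Hensel's Lemma in the appendix is deliberately algebraic and characteristic-free. Your argument, as written, relies on the Newton--Puiseux theorem: the roots of $W$ lie in $\bigcup_e \mathbb{K}(T)((x^{1/e}))$. This is automatic in characteristic zero, but in characteristic $p$ the Puiseux field is not algebraically closed (Artin--Schreier phenomena), and your expansion $\gamma_{j,k}(x)=\zeta_{j,k}x^{m_\ell/n_\ell}+\cdots$ may not exist when $p\mid n_\ell$. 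One could repair this (e.g.\ working with valuations on the full algebraic closure rather than explicit Puiseux series, or restricting to tame ramification), but as it stands your proof is not as general as the paper's. Where your proof adds value is in making completely explicit the mechanism behind irreducibility — the identification $q_\ell(1,s)=g(s)-T$ and Gauss's lemma — which the paper leaves implicit inside the hypotheses of its Hensel's Lemma.
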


\begin{proof}
Note first that using Weierstra{\ss} Preparation Theorem, $P(x,y,T)$
can be decomposed as a product of a unit and a Weierstra{\ss} polynomial in~$y$
(recall that $P$ is $y$-regular of order~$d$).
We apply the version of Hensel's Lemma in \ref{sec:hensel} to this Weierstra{\ss} polynomial and the result follows.
\end{proof}

\begin{obs}
Instead of using Hensel's Lemma one can follow the ideas in \cite[Section~2]{aclm:09}
\end{obs}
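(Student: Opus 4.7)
The plan is to construct $Q_\ell$ directly by grouping Puiseux roots of $P$ over the algebraic closure of $\mathbb{K}(T)((x))$, using the Newton-polygon machinery of \S1 (in place of Hensel's Lemma) to identify the branches coming from $\ell$, and using the linearity-in-$T$ of $q_\ell(1,\cdot)$ to prove irreducibility by a direct Galois argument.

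First, after Weierstra{\ss}-preparing $P(x,y,T)$ as a monic polynomial of $y$-degree $d$ over $\mathbb{K}(T)[[x]]$, I would pass to the algebraic closure of $\mathbb{K}(T)((x))$ --- the Puiseux field $\bigcup_{N} \overline{\mathbb{K}(T)}((x^{1/N}))$ --- and split $P = \prod_{i=1}^{d}(y - \phi_i(x))$. Propositions~\ref{phim} and~\ref{prop:toric-special}, combined with the separability hypothesis on $q_E$, show that exactly $n_\ell d_\ell$ of the $\phi_i$'s arise from following the edge $\ell$: they are the Puiseux series whose leading monomial has the form $c\,x^{m_\ell/n_\ell}$ with $c^{n_\ell}$ a root of $q_\ell(1,\cdot)$ in $\overline{\mathbb{K}(T)}$. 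Collecting these into $S_\ell$, I define
\[
 Q_\ell(x,y,T) := \prod_{\phi \in S_\ell} (y - \phi(x)).
\]

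Second, I would verify the two structural properties of $Q_\ell$. Coefficient integrality: the condition defining $S_\ell$ is preserved by $\operatorname{Gal}(\overline{\mathbb{K}(T)((x^{1/\infty}))} / \mathbb{K}(T)((x)))$, so $Q_\ell \in \mathbb{K}(T)((x))[y]$; positivity of the $x$-orders of the $\phi$'s then forces $Q_\ell \in \mathbb{K}(T)[[x]][y]$. Initial form: in the expansion of $\prod_\phi (y - \phi(x))$, the terms of minimal $\omega_\ell$-weight come from replacing each $\phi$ by its leading monomial $c_i\,x^{m_\ell/n_\ell}$, giving $\prod_i (y - c_i\,x^{m_\ell/n_\ell})$; as $c_i$ runs over all $n_\ell d_\ell$ solutions of $y^{n_\ell} = \alpha_j x^{m_\ell}$ with $\alpha_j$ a root of $q_\ell(1,\cdot)$, this product equals $q_\ell(x^{m_\ell}, y^{n_\ell})$ up to a nonzero constant.

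Third, irreducibility reduces to showing that $S_\ell$ is a single Galois orbit. The key algebraic input is that both $q_\ell(1,s) = q_E(s) - T$ and $q_\ell(1, s^{n_\ell}) = q_E(s^{n_\ell}) - T$ are irreducible in $\mathbb{K}(T)[s]$: being monic of degree~$1$ in $T$, the quotients $\mathbb{K}(T)[s]/(q_E(s) - T)$ and $\mathbb{K}(T)[s]/(q_E(s^{n_\ell}) - T)$ are both isomorphic to $\mathbb{K}(s)$, hence are integral domains. This forces the absolute Galois group of $\mathbb{K}(T)$ to act transitively on the roots $\alpha_j$ of $q_\ell(1,\cdot)$ and on their $n_\ell d_\ell$ combined $n_\ell$-th roots, which are precisely the leading coefficients $c_i$ of the branches in $S_\ell$. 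Combined with separability and Proposition~\ref{prop:toric-special} (so that each strict transform has $\bar{y}_1$-order $m_j = 1$, making each leading monomial extend to a \emph{unique} Puiseux branch in $S_\ell$), transitivity on leading monomials lifts to transitivity on $S_\ell$ itself. Consequently $Q_\ell$ is irreducible in $\mathbb{K}(T)((x))[y]$, and by Gau{\ss}'s lemma in the UFD $\mathbb{K}(T)[[x]][y]$. The main obstacle I anticipate is the injectivity of the leading-monomial map on $S_\ell$: a priori two Puiseux branches in $S_\ell$ with the same leading term might split only after further toric-Newton steps, and ruling this out rigorously requires using Proposition~\ref{prop:toric-special} together with $m_j = 1$ to show no such splitting occurs, and making precise how the ramification action $x^{1/n_\ell} \mapsto \zeta_{n_\ell} x^{1/n_\ell}$ interlocks with the Galois action on coefficients to produce a single orbit.
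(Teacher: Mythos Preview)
The statement you are addressing is a one-sentence remark with no proof in the paper; it merely points the reader to an alternative construction in \cite{aclm:09}. There is therefore no ``paper's proof'' to compare against, and what you have written is not a proof of the remark but rather a candidate alternative proof of Proposition~\ref{prop-irr1}, which is presumably what the remark intends.

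As such, your sketch is broadly reasonable in characteristic~$0$, and your observation that $q_E(s)-T$ and $q_E(s^{n_\ell})-T$ are irreducible over $\mathbb{K}(T)$ because they are linear in~$T$ is the right mechanism for irreducibility of~$Q_\ell$. However, there is a genuine gap relative to the paper's scope: you rely on Puiseux expansions over $\overline{\mathbb{K}(T)}((x^{1/N}))$, and these do not exist in general in positive characteristic. The paper explicitly works over an algebraically closed field of arbitrary characteristic (this is one of its stated aims, ``valid also in positive characteristic''), and the Hensel argument in the appendix needs only coprimality of the quasihomogeneous factors, not separability. Your argument, by contrast, needs $q_E(s)-T$ and $q_E(s^{n_\ell})-T$ to be \emph{separable} over $\mathbb{K}(T)$ to get simple roots ($m_j=1$) and a transitive Galois action; if $\chr\mathbb{K}=p$ divides $n_\ell$ or if $q_E'\equiv 0$, both fail. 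So your route recovers Proposition~\ref{prop-irr1} only under an additional hypothesis that the Hensel route does not require.

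A secondary point: Proposition~\ref{prop:toric-special} as stated applies to \emph{non}-dicritical edges, so invoking it at the dicritical edge to conclude $m_j=1$ is not literally licensed by the paper; you would need to note that over $\overline{\mathbb{K}(T)}$ the edge behaves as non-dicritical (all coefficients lie in the field) and that separability of $q_E(s)-T$ forces simple roots. You gesture at this but do not make it explicit.
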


If the dicritical edge $\ell $ is in another special pencil $r_1$ of the toric-Newton process
with coordinates $(x_1,\bar{y}_1)$,
then Proposition~\ref{prop-irr1} allow us to construct an irreducible factor
$\tilde{Q}_\ell(x_1,\bar{y}_1)$ of $r_1$ in $\mathbb{K}(T)[[x_1]][y_1]$; this factor
is $\bar{y}_1$-regular of order~$d_\ell$. Let us see the effect of the inverse
of the toric-Newton transformation in this element which produced $r_1$. 
The toric Newton transformation has two parts; 
the inverse of the translation is $\bar{y}_1\mapsto \bar{y}_1+\alpha_j=y_1$
while $\varphi_M^{-1}(x_1,y_1)=(x_1^b y_1^{-m},x_1^{-a} y_1^n)=(\bar{x},\bar{y})$.
Hence, the inverse
of the toric-Newton transformation will is
$$
(x_1,\bar{y}_1)\mapsto (x_1^b (\bar{y}_1+\alpha_j)^{-m},x_1^{-a} (\bar{y}_1+\alpha_j)^n)=(\bar{x},\bar{y}).
$$
Taking out denominators we obtain $\bar{Q}_\ell(\bar{x},\bar{y})$ which is a divisor of the 
special pencil $\bar{P}(\bar{x},\bar{y},T)$ at this level. It is not hard to see that
$\bar{Q}_\ell(\bar{x},\bar{y})$ is $\bar{y}$-regular of order~$n d_\ell$.
The contribution of this factor to the $\bar{y}$-degree is the expected one.
We
continue till we arrive to the first level; at each step the degree on the $y$-coordinate
is multiplied by the corrsponding v-ratio. The final pull-back $Q_\ell$ of $\tilde{Q}_\ell$ (taking out denominators)
to $\mathbb{K}(T)[[x,y]]$ is an irreducible factor of $P(x,y,T)$.

Let $\ell^1,  \ldots,\ell^s$ be the dicritical edges of the toric-Newton process.
For each dicritical edge $\ell^i$ we consider the sequence of
v-ratios $n_1^i,\dots,n_{h_i}^i$ ($h_i$ is the number of steps till $\ell^i$ appear) and its degree
$d_{\ell^i}$. The factor $Q_{\ell^i}$ has $y$-order 
$$
d_i:=d_{\ell_i}\cdot\prod_{j=1}^{h_i} n_{j}^i
$$
If $d=\ord_y(P)$, note that $d=\sum_{j=1}^ s d^j$ and  we conclude the next Theorem, see \cite{AL2} in more generality.

\begin{thm}
Let $P(x,y,T)$ be a special pencil. Then there is a one-to-one
correspondence between dicritical edges of the pencil and irreducible factors of $P\in\mathbb{K}[T][[x,y]]$.
By this correspondence to an edge $\ell^j$ we associate the factor
$Q_{\ell^j}$. 
\end{thm}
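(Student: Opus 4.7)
The plan is to assemble the correspondence from the construction already sketched in the paragraphs preceding the theorem, and then check injectivity and surjectivity separately. The construction direction is: for each dicritical edge $\ell^i$ of the toric-Newton process, we apply Proposition~\ref{prop-irr1} at the level of the pencil at which $\ell^i$ appears. This yields an irreducible factor $\tilde{Q}_{\ell^i}(x_i, \bar{y}_i, T) \in \mathbb{K}(T)[[x_i]][\bar{y}_i]$, $\bar{y}_i$-regular of order $d_{\ell^i}$, whose weighted initial form is $q_{\ell^i}(x_i^{m_{\ell^i}}, \bar{y}_i^{n_{\ell^i}})$. We then pull this factor back through the inverses of the toric-Newton transformations (as described just before the theorem), and after clearing denominators we obtain $Q_{\ell^i}(x, y, T) \in \mathbb{K}(T)[[x]][y]$, a divisor of $P(x,y,T)$ whose $y$-order is $d^i = d_{\ell^i} \cdot \prod_{j=1}^{h_i} n_j^i$.

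To see that $Q_{\ell^i}$ is irreducible over $\mathbb{K}(T)[[x,y]]$, I would note that irreducibility is preserved under the birational toric transformations (up to units and powers of the exceptional coordinate), since they induce isomorphisms of the corresponding fraction fields. Any nontrivial factorization of $Q_{\ell^i}$ at the original level would descend, after applying the composition of toric-Newton transformations, to a nontrivial factorization of $\tilde{Q}_{\ell^i}$, contradicting Proposition~\ref{prop-irr1} at the level of $\ell^i$.

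For injectivity, I would argue via the initial forms. Two distinct dicritical edges $\ell^i \neq \ell^j$ either live at different levels of the toric-Newton process, in which case their associated factors are distinguished by the sequence of Newton polygons they produce upon successive transformation, or live at the same level but have distinct supporting lines, in which case their weighted initial forms $q_{\ell^i}(x^{m_{\ell^i}}, y^{n_{\ell^i}})$ are visibly distinct (different weights $\omega_{\ell^i}$ or different exponents). In either case $Q_{\ell^i} \neq Q_{\ell^j}$ (and they share no common irreducible factor).

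For surjectivity, the key is the degree count. The $y$-order of $P$ equals $d$, and the Weierstra{\ss} preparation theorem (used in Proposition~\ref{prop-irr1}) realizes $P$, up to a unit in $\mathbb{K}(T)[[x,y]]$, as a Weierstra{\ss} polynomial in $y$ of degree $d$. The factors $Q_{\ell^i}$ are pairwise coprime irreducible divisors of this polynomial, and their $y$-orders satisfy $\sum_{i=1}^s d^i = d$, as noted in the paragraph preceding the statement. Since the $y$-orders of the $Q_{\ell^i}$ already account for the full $y$-degree of the Weierstra{\ss} polynomial, there is no room for further irreducible factors. The main obstacle in this plan is justifying the degree formula $d = \sum d^i$: this amounts to tracking, through each step of the toric-Newton process, how the $y$-order of $P$ is distributed among the branches emerging from the edges of the current Newton polygon, and requires a careful induction on the depth of the toric-Newton tree, using that a non-dicritical edge $\ell$ with root $\alpha_j$ of multiplicity $m_j$ produces a strict transform of $y_1$-order exactly $m_j$ (Proposition~\ref{prop:toric-special}) and that a dicritical edge contributes its full degree $d_\ell$ multiplied by the cumulative product of v-ratios from the preceding translations.
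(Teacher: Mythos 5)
Your proposal follows essentially the same route as the paper: construct $Q_{\ell^i}$ by applying the Hensel-type factorization of Proposition~\ref{prop-irr1} at the level where the dicritical edge appears, pull back through the inverses of the toric-Newton transformations (clearing denominators), and conclude surjectivity from the $y$-order count $d=\sum_i d^i$ against the Weierstra{\ss} polynomial of degree~$d$. The extra remarks you add on irreducibility (transport under birational maps) and injectivity (noting that each Newton polygon in the tree has at most one dicritical edge, by Remark~\ref{obs:trans}, so distinct dicritical edges arise from distinct toric-Newton histories) are consistent with, and just make explicit, what the paper leaves implicit in the paragraph preceding the theorem.
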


For typical $t\in\mathbb{K}$ the irreducible components of $\tilde{C}_t:=\{P(x,y,t)=0\}$, 
i.e. $\spec(R/(P(x,y,t)))$, are in one-to-one correspondence with the factors
of $P(x,y,T)$ in $\overline{\mathbb{K}(T)}[[x,y]]$ and the factors corresponding
to a given factor in $\mathbb{K}[T][[x,y]$ are the curvettes of the corresponding dicritical
(as many as the degree).

\section{Special pencils, polynomials and atypical fibers}\label{sec:poli}

In this section we recall the well-known relationship between special pencils and polynomials.
The polynomial $f(x,y)\in\mathbb{K}[x,y]$, $D:=\deg f$, defines a polynomial map 
$f:\mathbb{A}^2_{\mathbb{K}}\to\mathbb{A}^1_{\mathbb{K}}$, where $\mathbb{A}^j:=\mathbb{A}^j_{\mathbb{K}}$ 
is the affine space of dimension~$j$ over $\mathbb{K}$. We consider $(x,y)$ the affine coordinates
of $\mathbb{A}^2$ and $[X:Y:Z]$ the homogeneous coordinates of $\mathbb{P}^2:=\mathbb{P}^2_{\mathbb{K}}$
with the inclusion $(x,y)\hookrightarrow[x:y:1]$. Let us consider the rational extension of 
$f$ to a map $\tilde{f}:\mathbb{P}^2\dashrightarrow\mathbb{P}^1\equiv\mathbb{K}\cup\{\infty\}$.
If $f(x,y)=\sum_{j=0}^D f_j(x,y)$ is the decomposition in homogeneous components then
$$
B:=\{[u:v:0]\mid f_D(u,v)=0\}
$$
is the set of base points of~$\tilde{f}$.
At every base point $P_0\in B$ (at the line at infinity) the corresponding pencil  is an  special pencil. 

 Assume that $P_0:=[1:0:0]$ is one of these points.
In the affine chart $X\neq 0$ (with affine coordinates $y,z$) this map looks like
$$
\frac{f_y(y,z)}{z^D},\quad f_y(y,z):=z^D f\left(\frac{1}{z},\frac{y}{z}\right)
$$
and the fibers of $\tilde{f}$ near $P_0$ are of the form $f_y(y,z)-t z^D=0$, for $t\in\mathbb{K}\cup\{\infty\}$,
hence a special pencil.

By definition the dicriticals of the polynomial $f$ at infinity are the dicriticals
of the corresponding special pencils at all base points $P_0\in B$.
We define accordingly the atypical values at infinity at a dicritical of the polynomial~$f$,
see also~\cite{du:98}. 

In \cite{gw:13},  Gwo{\'z}dziewicz finds that the number of atypical values at infinity
of a polynomial is bounded above by the minimum of the two following numbers:
\begin{itemize}
\item The number $\nu_{\infty}^{\text{\rm gen}}$ of the branches at infinity of a generic fiber.
\item\label{numin} The number $\nu_{\infty}^{\text{\rm min}}+1$ where $\nu_\infty^{\text{\rm min}}$  is the minimal
number of branches at infinity for any fiber. 
\end{itemize}
Therefore an \emph{algebraic proof} of these results follows immediately from our algebraic proof of Corollary~\ref{cor:gw}.

In the same work, Gwo{\'z}dziewicz  asked if it is possible to reach the bound $\nu_\infty^{\text{\rm gen}}$
(or $\nu_{\infty}^{\text{\rm min}}+1$).
As we have observed in Remark~\ref{obs:bound}, to reach this bound imposes strong
conditions on the special pencils over all the dicriticals $E$:
\begin{itemize}
\item $n_E>1$.
\item $q_E'$ must have simple roots.
\item $q_E$ must pairwise \emph{separate} the values of $0$ and the roots of $q_E'$.
\item The sets of atypical values are disjoint for any pair of dicriticals. 
\end{itemize}
When we  deal with polynomials the last condition must be applied to any 
dicritical at infinity. Besides this difficulty the geometry of the polynomials
imposes more difficulties to find an example reaching the bound. 

Namely, no polynomial with only one dicritical reaches the bound. Assume for simplicity
that the polynomial is primitive. Then, the only dicritical is of degree~$1$, see e.g. \cite{ea:cre}.
Hence, by \cite{moh:83} all the fibers have only one branch at infinity
and by \cite{ep:83}, there is no atypical value at infinity.
It is not hard to find polynomials with two dicriticals $E_1,E_2$ both of multiplicity one but $n_i>1$.
These polynomials have two branches at infinity and one atypical value for each dicritical.
The problem is that most obvious examples satisfy that the set of atypical values is the same for both dicriticals.

\smallskip
\noindent\textbf{Gwo{\'z}dziewicz's question.} \begingroup\em
Does there exist a polynomial $f(x, y)$ with $n$ nonzero critical values at infinity 
such that the curve $f(x, y) = 0$ has $n$ branches at infinity?
\endgroup

\begin{ejm}\label{ejm-cotaep}
No polynomial of degree $\leq 10$ and two dicriticals reaches the bound.
The polynomial
$$
p(x,y)=x^{6} y^{5} - 5 x^{5} y^{4} + 10 x^{4} y^{3} - 2 x^{3} y^{3} - 10 x^{3}
y^{2} + 5 x^{2} y^{2} + 5 x^{2} y - \frac{15}{4} x y -  x + y
$$
does.
We will show that this polynomial $p(x,y)$ has two non-zero critical values at infinity and the curve 
$p(x,y)=0$ has two branches at infinity. 
This polynomial can be written as
$$
p(x,y)={\left(x^{3} y^{2} - 1\right)}^{2} y+{\left(x y - 1\right)}^{5} x-x^{6} y^{5}
+5 x y \left(x y -\frac{3}{4}\right).
$$
\begin{figure}[ht]
\begin{center}
\begin{subfigure}[b]{.45\textwidth}
\begin{center}
\begin{tikzpicture}[scale=.5,vertice/.style={draw,circle,fill,minimum size=0.1cm,inner sep=0}]
\draw[fill, color=blue!10!white] (0,0)--(1,0)--(6,5)--(0,1)--cycle;
\draw[help lines,step=1,dashed] (0,0) grid (6,5);
\draw[very thick] (0,6)--(0,0)--(7,0);
\foreach \x/\xtext in {0,...,6}
 \node[anchor=north] at (\x,-.1) {\x};
\foreach \y in {0,...,5}
 \node[anchor=east] at (-.1,\y) {\y};

\node[vertice] (a00) at (0,0) {};
\node[vertice] (a10) at (1,0) {};
\node[vertice] (a65) at (6,5) {};
\node[vertice] (a33) at (3,3) {};
\node[vertice] (a01) at (0,1) {};
\node[vertice] (a11) at (1,1) {};
\node[vertice] (a22) at (2,2) {};
\foreach \x/\xtext in {2,...,5}
\node[vertice] at (\x,\x-1) {};
\draw[,very thick] (a00)--(a10)--(a65)--(a01)--(a00);
\node[below] at (11,-.6) {\phantom{$t$}};
\end{tikzpicture}
\caption{Newton polygon of $p$}
\label{fig:newtonpxy}
\end{center}
\end{subfigure}
\begin{subfigure}[b]{.45\textwidth}
\begin{center}
\begin{tikzpicture}[scale=.5,xscale=1,vertice/.style={draw,circle,fill,minimum size=0.15cm,inner sep=0}]
\draw[fill, color=blue!10!white] ((0,7)--(0,5)--(10,0)--(12,0)--(12,7);
\draw[help lines,step=1,dashed] (0,0) grid (12,7);
\draw[very thick] (0,7)--(0,0)--(12,0);
\foreach \x/\xtext in {0,...,6}
 \node[anchor=east] at (-.1,\x) {\tiny{\x}};
\foreach \y in {0,1,...,11}
 \node[anchor=north] at (\y,-.1) {\tiny{\y}};
\node[below] at (11,-.6) {$t$};
\node at (-.8,7) {$y$};
\node at (12,-.8) {$z$};
\draw[,very thick] (0,5)--(10,0);
\foreach \x/\xtext in {(10, 1), (9, 1), (10, 0), (11, 0), (7, 2), (8, 1), (5, 3), (6, 2), (4,
3), (2, 4), (0, 5)}
\node[vertice] at \x {};
\end{tikzpicture}\caption{Newton polygon of $p_y$}
\label{fig:newtonpyz1}
\end{center}
\end{subfigure}
\caption{}
\end{center}
\end{figure}

In order to obtain the resolution of the polynomial we have to study the special
pencils located at the two points at infinity of $p$.
The first one is given by
$$
p_x(x,z)=(x^3-z^5)^2+\dots-t z^{11}
$$
and it is the one in Example~\ref{ej:1} (see also Example~\ref{ej:4}). We have seen that it has only one dicritical
which is of degree~one and v-ratio $2$. There is only one atypical value for this dicritical,
namely $t=-\frac{5}{8}$.

Let us study now the special pencil associated to the other point at infinity:
$$
p_y(y,z)=(y-z^2)^5+\dots-t z^{11}
$$

We are in the situation of Remark~\ref{obs:trans}, hence we
perform a translation as a change of variables,
$y=y_1+z_1^2$, $z=z_1$. In Figure~\ref{fig:newtonpyz2a} we see the new Newton polygon
where the coefficient of $z^{11}$ equals $-\left(t+\frac{3}{4}\right)$.
The Newton polygon for $t=-\frac{3}{4}$ is in Figure~\ref{fig:newtonpyz2b}. Hence, there is one atypical value
for this polynomial associated to this dicritical.

\begin{figure}[ht]
\begin{center}
\begin{subfigure}[b]{.48\textwidth}
\begin{tikzpicture}[scale=.5,xscale=.8,vertice/.style={draw,circle,fill,minimum size=0.15cm,inner sep=0}]
\draw[fill, color=blue!10!white]  (0,7)--(0,5)--(11,0)--(13,0)--(13,7);
\draw[help lines,step=1,dashed] (0,0) grid (13,7);
\draw[very thick] (0,7)--(0,0)--(13,0);
\foreach \x/\xtext in {0,...,6}
 \node[anchor=east] at (-.1,\x) {\tiny{\x}};
\foreach \y in {0,1,...,12}
 \node[anchor=north] at (\y,-.1) {\tiny{\y}};
\node[below] at (11,-.6) {$t$};
\node at (-.8,7) {$y_1$};
\node at (13,-.8) {$z_1$};
\draw[,very thick] (0,5)--(11,0);
\foreach \x/\xtext in {(12, 0), (10, 1), (11, 0), (9, 1), (7, 2), (5, 3), (0, 5)}
\node[vertice] at \x {};
\end{tikzpicture}
\caption{}
\label{fig:newtonpyz2a}
\end{subfigure}
\begin{subfigure}[b]{.48\textwidth}
\begin{tikzpicture}[scale=.5,xscale=.8,vertice/.style={draw,circle,fill,minimum size=0.15cm,inner sep=0}]
\draw[fill, color=blue!10!white] (0,6)--(0,5)--(9,1)--(12,0)--(13,0)--(13,6);
\draw[help lines,step=1,dashed] (0,0) grid (13,6);
\draw[very thick] (13,0)--(0,0)--(0,6);
\foreach \x/\xtext in {0,...,5}
 \node[anchor=east] at (-.1,\x) {\tiny{\x}};
\foreach \y in {0,1,...,12}
 \node[anchor=north] at (\y,-.1) {\tiny{\y}};
\node at (-.8,6) {$y_1$};
\node at (13,-.8) {$z_1$};
\draw[,very thick] (0,5)--(9,1)--(12,0);
\foreach \x/\xtext in {(12, 0), (10, 1), (9, 1), (7, 2), (5, 3), (0, 5)}
\node[vertice] at \x {};
\end{tikzpicture}
\caption{}
\label{fig:newtonpyz2b}
\end{subfigure}
\caption{}
\label{fig:newtonpyz2}
\end{center}
\end{figure}

Then, the two atypical values for each dicritical are different
and the polynomial~$p$ reaches the bound: as many non-zero atypical fibers at infinity as branches at infinity
for the fiber $p(x,y)=0$. The two atypical fibers at infinity have three branches.
The polynomial $p$ has only one (affine) singular fiber $p^{-1}(-\frac{20}{27})$ which has an ordinary double point
at $\left(-900,-\frac{4}{3375}\right)$.
\end{ejm}

\begin{figure}[ht]
\begin{center}
\begin{subfigure}[b]{.45\textwidth}
\begin{center}
\begin{tikzpicture}[scale=.5,xscale=1,vertice/.style={draw,circle,fill,minimum size=0.15cm,inner sep=0}]
\draw[fill, color=blue!10!white] (0,0)--(6,0)--(8,2)--(0,6);
\draw[help lines,step=1,dashed] (0,0) grid (9,7);
\draw[very thick] (0,7)--(0,0)--(9,0);
\foreach \x/\xtext in {0,...,6}
 \node[anchor=east] at (-.1,\x) {\tiny{\x}};
\foreach \y in {0,1,...,8}
 \node[anchor=north] at (\y,-.1) {\tiny{\y}};
\node at (-.5,7) {$y$};
\node at (9,-.5) {$x$};
\draw[,very thick] (0,0)--(6,0)--(8,2)--(0,6);
\foreach \x/\xtext in {(8, 2), (6, 3), (6, 2), (4, 4), (6, 1), (4, 3), (2, 5), (6, 0), (4, 2),
(2, 4), (0, 6), (4, 1), (2, 3), (0, 5), (4, 0), (2, 2), (0, 4), (2, 1),
(0, 3), (2, 0), (0, 1),(0,0)}
\node[vertice] at \x {};
\node[below] at (10,-.6) {\phantom{$t$}};
\end{tikzpicture}
\caption{Newton polygon of $f$}
\label{fig:newtonfxy}
\end{center}
\end{subfigure}
\begin{subfigure}[b]{.45\textwidth}
\begin{center}
\begin{tikzpicture}[scale=.5,xscale=1,vertice/.style={draw,circle,fill,minimum size=0.15cm,inner sep=0}]
\draw[fill, color=blue!10!white] (11,0)-- (4,0)--(0,2)--(0,7)--(11,7);
\draw[help lines,step=1,dashed] (0,0) grid (11,7);
\draw[very thick] (0,7)--(0,0)--(11,0);
\foreach \x/\xtext in {0,...,6}
 \node[anchor=east] at (-.1,\x) {\tiny{\x}};
\foreach \y in {0
,1,...,10}
 \node[anchor=north] at (\y,-.1) {\tiny{\y}};
\node[below] at (10,-.6) {$t$};
\node at (-.5,7) {$y$};
\node at (11,-.5) {$z$};
\draw[,very thick] (4,0)--(0,2);
\foreach \x/\xtext in {(9, 1), (7, 3), (6, 4), (5, 5), (4, 6), (8, 0), (7, 1), (6, 2), (5, 3),
(4, 4), (3, 5), (6, 0), (5, 1), (4, 2), (3, 3), (2, 4), (4, 0), (3, 1),
(2, 2), (1, 3), (0, 2),(10,0)}
\node[vertice] at \x {};
\end{tikzpicture}\caption{Newton polygon of $f_y$}
\label{fig:newtonfyz}
\end{center}
\end{subfigure}
\caption{}
\end{center}
\end{figure}

\begin{ejm}\label{ej-bound-sp-polynomial}
In the same way as in the local case, see Example \ref{ej-bound-sp}, the following polynomial shows that 
our bounds are better than the ones in \cite{gw:13}. Consider the  following polynomial of degree $10$ (see its Newton polygon
in Figure~\ref{fig:newtonfxy}): 
\begin{gather*}
f(x,y) = y^6-4(x^2+1)y^5+\left(12x^2+6x^4+\frac{41}{4}\right)y^4-\left(4x^6+\frac{25}{2}
+12x^4+\frac{99}{4}x^2\right)y^3\\
+\left(x^8+4x^6+\frac{75}{4}x^4+\frac{59}{4}x^2\right)y^2
+
\left(-\frac{17}{4}x^6+\frac{75}{4}x^2+4x^4+\frac{25}{4}\right)y\\
-\frac{25}{2}x^2-\frac{25}{4}x^6-\frac{71}{4}x^4.
 \end{gather*}
This polynomial has  two points at infinity,  that is $P_0=[1:0:0]$ and $P_1=[0:1:0] $ 
Thus the corresponding special pencil at $P_0$ is given by 
\begin{gather*}
 f_y(z,y)-Tz^{10}= y^6z^4-4y^5z^3-4y^5z^5+12y^4z^4+6y^4z^2+\frac{41}{4}y^4z^6-4y^3z-\frac{25}{2}y^3z^7\\
-12y^3z^3-\frac{99}{4}y^3z^5+4y^2z^2+\frac{75}{4}y^2z^4+\frac{59}{4}y^2z^6
+y^2-\frac{17}{4}yz^3\\
+\frac{75}{4}yz^7+4yz^5+\frac{25}{4}yz^9
-\frac{25}{2}z^8-\frac{25}{4}z^4-\frac{71}{4}z^6-Tz^{10}
\end{gather*}
Let us see that
this special pencil has  $2$ branches for all $t\in \mathbb{K}$ and it has two dicriticals $E_1$ and $E_2$ of  degree $1$.  
Its Newton polygon (see Figure~\ref{fig:newtonfyz}) has only one edge  $\ell$ 
which is not dicritical and such that 
$$
P_\ell=y^2-\frac{25}{4}z^4=\frac{(2 y-5 z^2) (2 y+5z^2)}{4}.
$$
Thus $q_\ell$ has degree 2 and two  simple roots $\pm\frac{5}{2}$. Making the toric-Newton transformation 
associated to each root $(\ell,\pm\frac{5}{2})$
one gets two dicriticals, each  one  of degree~1 (which are sections with no bamboo). Moreover,  these two
dicriticals have no atypical value associated. 

\begin{figure}[ht]
\begin{center}
\begin{subfigure}[b]{.4\textwidth}
\begin{center}
\begin{tikzpicture}[scale=.5,xscale=.8,vertice/.style={draw,circle,fill,minimum size=0.15cm,inner sep=0}]
\draw[fill, color=blue!10!white] (11,0)-- (4,0)--(0,8)--(0,9)--(11,9);
\draw[help lines,step=1,dashed] (0,0) grid (11,9);
\draw[very thick] (0,9)--(0,0)--(11,0);
\foreach \x/\xtext in {0,...,8}
 \node[anchor=east] at (-.1,\x) {\tiny{\x}};
\foreach \y in {0,1,...,10}
 \node[anchor=north] at (\y,-.1) {\tiny{\y}};
\node[below] at (10,-.6) {$t$};
\node at (-.5,9) {$x$};
\node at (11,-.5) {$z$};
\draw[,very thick] (4,0)--(0,8);
\foreach \x/\xtext in {(8, 2), (6, 4), (4, 6), (9, 0), (7, 2), (5, 4), (3, 6), (6, 2), (4, 4),
(2, 6), (0, 8), (7, 0), (5, 2), (3, 4), (1, 6), (6, 0), (4, 2), (2, 4),
(5, 0), (3, 2), (4, 0),(10,0)}
\node[vertice] at \x {};
\end{tikzpicture}
\caption{Newton polygon of $f_x$}
\label{fig:newtonfxz}
\end{center}
\end{subfigure}
\begin{subfigure}[b]{.55\textwidth}
\begin{center}
\begin{tikzpicture}[scale=.5,xscale=.8,vertice/.style={draw,circle,fill,minimum size=0.15cm,inner sep=0}]
\draw[fill, color=blue!10!white] (13,0)--  (8,0)--(0,4)--(0,7)--(13,7);
\draw[help lines,step=1,dashed] (0,0) grid (13,7);
\draw[very thick] (0,7)--(0,0)--(13,0);
\foreach \x/\xtext in {0,...,6}
 \node[anchor=east] at (-.1,\x) {\tiny{\x}};
\foreach \y in {0,1,...,12}
 \node[anchor=north] at (\y,-.1) {\tiny{\y}};
\node[below] at (12,-.6) {$t$};
\node at (-.5,7) {$x_1$};
\node at (13,-.5) {$\bar{z}_1$};
\draw[,very thick] (8,0)--(0,4);
\foreach \x/\xtext in {(10, 6), (10, 5), (10, 4), (8, 6), (10, 3), (8, 5), (10, 2), (8, 4),
(6, 6), (10, 1), (8, 3), (6, 5), (10, 0), (8, 2), (6, 4), (8, 1), (6,
3), (4, 5), (8, 0), (6, 2), (4, 4), (6, 1), (4, 3), (4, 2), (2, 4), (2,
3), (0, 4),(12, 6), (12, 5), (12, 4), (12, 3), (12, 2), (12, 1), (12, 0)}
\node[vertice] at \x {};
\end{tikzpicture}
\caption{Newton polygon of $f_{x,1}$}
\label{fig:newtonfxz1}
\end{center}
\end{subfigure}
\caption{}
\end{center}
\end{figure}

The other special pencil at $P_1$ is given by 
\begin{gather*}
f_x(z,x)-Tz^{10}=-\frac{25}{4}z^4x^6-\frac{17}{4}z^3x^6+4z^2x^6-4x^6z-12z^3x^4
+6x^4z^2\\-\frac{71}{4}z^6x^4+4z^5x^4+\frac{75}{4}z^4x^4-\frac{25}{2}z^8x^2+\frac{75}{4}z^7x^2
+\frac{59}{4}z^6x^2-\frac{99}{4}z^5x^2\\+12x^2z^4-4x^2z^3+z^4-4z^5+\frac{41}{4}z^6-\frac{25}{2}z^7+\frac{25}{4}z^9+x^8-Tz^{10}.
\end{gather*}
Let us check that this special pencil  has  $4$ branches for all $t\in \mathbb{K}$
and one dicritical $E.$
Its Newton polygon is in Figure~\ref{fig:newtonfxz}; there is only one edge~$\ell$, which is not dicritical
and the quasihomogenous polynomial associated to the edge is $P_\ell=(x^2-z)^4$.
We need only one toric-Newton transformation at this stage:
$$
\varphi_M(z_1,x_1)=(z_1^2 x_1,z_1 x_1),\quad 
x_1\mapsto \bar{x}_1+1
$$
The Newton polygon of the strict transform $f_{x,1}(z_1,\bar{x}_1)$ is in Figure~\ref{fig:newtonfxz1}.
We have only one edge $\ell_1$, which is non-dicritical with
$P_{\ell_1}=(\bar{x}_1+z_1^2)^4$. If we perform the translation of Remark~\ref{obs:trans}
we obtain a new special pencil $f_{x,2}(z_2,x_2)$. The Newton polygon
is in Figure~\ref{fig:newtonfxz2}. We have only one edge $\ell_2$, which is dicritical, since
$$
P_{\ell_2}=x_2^4- 2 x_2^2 z_2^5+(1-T) z_2^{12},
$$
i.e., 
its v-ratio equals~$2$, $q_\ell(z)=z^4 -2 z^2+1-T$ and
$q_E(z)=z^4 -2 z^2+1$.
The roots of   $q_E'(z)$ are $\alpha=0,1,-1$, hence the bound equals~$3$. 
Since $q_E(0)=0$ and $q_E(1)=q_E(-1)=1$, there are exactly two atypical values, $t=0,1$.  

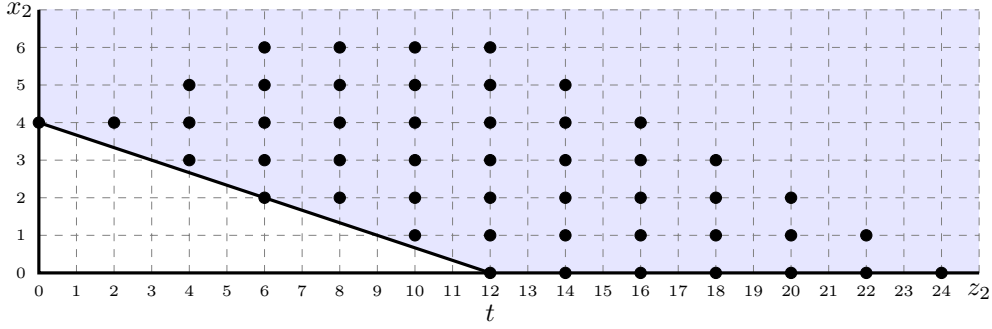
\begin{figure}[ht]
\begin{center}
\begin{tikzpicture}[scale=.5,xscale=1,vertice/.style={draw,circle,fill,minimum size=0.15cm,inner sep=0}]
\draw[fill, color=blue!10!white] (25,0)-- (12,0)--(0,4)--(0,7)--(25,7);
\draw[help lines,step=1,dashed] (0,0) grid (25,7);
\draw[very thick] (0,7)--(0,0)--(25,0);
\foreach \x/\xtext in {0,...,6}
 \node[anchor=east] at (-.1,\x) {\tiny{\x}};
\foreach \y in {0,1,...,24}
 \node[anchor=north] at (\y,-.1) {\tiny{\y}};
\node[below] at (12,-.6) {$t$};
\node at (-.5,7) {$x_2$};
\node at (25,-.5) {$z_2$};
\draw[,very thick] (12,0)--(0,4);
\foreach \x/\xtext in {(18, 0), (12, 1), (6, 6), (14, 4), (10, 6), (16, 2), (12, 5), (6, 2),
(14, 0), (24, 0), (10, 3), (18, 1), (10, 4), (12, 2), (14, 5), (8, 5),
(16, 3), (4, 4), (12, 6), (20, 0), (6, 3), (14, 1), (0, 4), (8, 6), (12,
3), (6, 4), (18, 2), (8, 2), (16, 4), (4, 5), (20, 1), (10, 5), (16, 0),
(10, 1), (14, 2), (22, 0), (6, 5), (8, 3), (18, 3), (12, 0), (20, 2),
(16, 1), (12, 4), (4, 3), (14, 3), (22, 1), (10, 2), (2, 4), (8, 4)}
\node[vertice] at \x {};
\end{tikzpicture}
\caption{Newton polygon of $f_{x,2}$}
\label{fig:newtonfxz2}
\end{center}
\end{figure}

\end{ejm}

\begin{ejm}\label{ejm:rfr}
The referee asked   whether Gwo{\'z}dziewicz's question
has an affirmative answer for other positive integer $n \geq 3$.
In this example we provide a polynomial family which confirms the required positive answer.

For any~$d$, we consider two monic polynomials $q(t),Q(t)\in\mathbb{K}[t]$ of degrees~$2 d$ and $2 d+1$, respectively,
such that:
\begin{enumerate}
\enet{(C\arabic{enumi})}
\item\label{c1} $\deg(Q(t)-t q(t))\leq d$.
\item\label{c2} $q(t)=\prod_{j=1}^m (t-a_j)^{m_j}$, $\sum_{j=1}^m m_j=2 d$, $m_j\geq 2$.
\item\label{c3} $Q(t)=\prod_{j=1}^n (t-b_j)^{n_j}$, $\sum_{j=1}^n n_j=2 d+1$, $n_j\geq 2$.
\end{enumerate}
Let $f(x,y)$ be the polynomial
$$
f(x,y)=(y+1)\left(x q(x y)+(y+1) Q(x y)\right).
$$
Its Newton polygon has four  edges whose vertices are
given by 
$${[(0,0),(0,1),(2d+1,2d+2),(2d+1,2d),(1,0)]}.$$
Let $\ell_1=[(0,1),(2d+1,2d+2)]$,  $\ell_2=[(1,0),(2d+1,2d)]$ and
$\ell_3=[(2d+1,2d),(2d+1,2d+2)]$ be the edges not passing through the origin.

The support polynomial $f_{\ell_1}$ is $y Q(x y)$. Because of condition~\ref{c3},
one can see that each root~$b_j$ induces a dicritical section with bamboo,
producing exactly one atypical value.

The support polynomial $f_{\ell_2}$ is $x q(x y)$. As above, condition~\ref{c2} implies
that each root~$a_j$ induces a dicritical section with bamboo,
producing exactly one atypical value.

The support polynomial of the vertical edge $\ell_3$ is $f_{\ell_3}=x^{2 d+1} y^{2 d}(y+1)^2$.
The condition~\ref{c1} implies that the translation $y=y_1-1$ produces a new edge $\ell'_3=[(0,0),(2d+1, 2)]$. 
Hence $ \ell_3'$ is a dicritical edge with bamboo ($v_{\ell'_3}=2$) and only one atypical value. 

Of course the conditions \ref{c1}, \ref{c2} and \ref{c3} impose restrictions but one can see that
solutions exist and for generic choices, the atypical values for each dicritical are distinct, providing
the required affirmative answer.

For example, this is the case if 
$$
Q(t)=(t+1)^{2 d+1}
\text{ and } 
p(t)=\left(\prod_{j=1}^d (t-a_j)\right)^2.
$$
Then~\ref{c1} allows to give the coefficients of $p(t)$. 
A tedious verification ensures that 
$f$ has $d+2$ dicritical sections with bamboo, 
and the generic fiber has $d+2$ branches at infinity, its genus being~$d$. 
The fact that they have $d+2$ different atypical values has been checked for
small values of~$d$ ($\leq 20$) with \texttt{SAGE}~\cite{sage510}.
\end{ejm}

\begin{obs}
Note that for $d=1$, we can obtain a polynomial with three branches and degree~$7$,
while Example~\ref{ejm-cotaep}, with two branches, has degree~$11$. Surprisingly,
this is the smallest degree for a two-branch polynomial reaching the bound. Note
that all the examples have only dicritical sections.
\end{obs}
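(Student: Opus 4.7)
The remark makes three claims; I address them in order. The first, that the specialisation $d=1$ in Example~\ref{ejm:rfr} yields a polynomial of degree~$7$ with three branches at infinity, is a direct check: conditions~\ref{c1}--\ref{c3} determine $Q(t)$ and $q(t)$ up to the scalar $a$ appearing in $q(t)=(t-a)^{2}$, and \ref{c1} forces $a=-3/2$. With this choice one expands $f(x,y)=(y+1)\bigl(x\,q(xy)+(y+1)\,Q(xy)\bigr)$, verifies that its Newton polygon is the pentagon $[(0,0),(0,1),(3,4),(3,2),(1,0)]$ predicted in the example, and reads off $\deg f=7$; the three branches at infinity and their atypicality follow from the analysis already carried out in Example~\ref{ejm:rfr}. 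The third claim, that all of our examples employ only dicritical sections, is verified by inspection: in each case the polynomials $q_E(z)$ attached to the dicriticals of the toric-Newton tree have degree~$1$ in~$z$, which by Proposition~\ref{prop:dicpol} says precisely that $d_E=1$ and hence~$E$ is a section.

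The substantive content lies in the middle claim: no polynomial of degree at most $10$ with exactly two branches at infinity can realise the Gwo{\'z}dziewicz bound. The plan is to combine the necessary conditions collected in Remark~\ref{obs:bound} with Newton-polygon bookkeeping at each point at infinity. Reaching the bound forces, at every dicritical~$E$ of the polynomial, $n_E>1$, $q_E'$ with simple non-zero roots, $q_E$ separating these roots and~$0$, and disjointness of the atypical value sets across the dicriticals; the correspondence of Section~\ref{sec-kdt} then implies that a two-branch polynomial has exactly two dicriticals, each producing one atypical value via the $n_E>1$ mechanism of Theorem~\ref{thm:atypical}\ref{thm:atypical2}.

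The main obstacle will be the enumeration of admissible Newton polygons. For every partition of the two branches among the points at infinity and every compatible pair of $v$-ratios $(n_1,n_2)$ with $n_i\geq 2$, one writes the minimal Newton polygon of the local special pencil and pulls this back to a lower bound for $\deg f$ via the degree formula $d_i=d_{\ell_i}\prod_j n_j^i$ of Section~\ref{sec-kdt}. The disjointness requirement of Remark~\ref{obs:bound} rules out the symmetric choices which would otherwise permit small degree; for instance, a polynomial of the shape $g(x,y)^{2}+(\text{lower order})$ produces coincident atypical value sets and is therefore excluded. Going through the finite list of candidate Newton shapes with $\deg f\leq 10$ one checks that no choice of coefficients satisfies the full list of constraints, while Example~\ref{ejm-cotaep} shows that degree~$11$ is attainable. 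The hardest part will be excluding the borderline cases $\deg f\in\{9,10\}$, where the Newton polygons are flexible enough that disjointness is the only remaining obstruction; for these a computer-algebra verification in the spirit of the \texttt{SAGE} check cited in Example~\ref{ejm:rfr} is likely the cleanest way to finish.
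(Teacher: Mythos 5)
The paper itself offers no proof of this remark: all three claims, and in particular the assertion that $11$ is the smallest degree of a two-branch polynomial attaining the bound, are stated as facts, evidently settled by the authors' own finite search (cf.\ the opening sentence of Example~\ref{ejm-cotaep}, ``No polynomial of degree $\leq 10$ and two dicriticals reaches the bound'', and the \texttt{SAGE} verification cited in Example~\ref{ejm:rfr}). Against that background your proposal is the right kind of plan and is consistent with what the authors presumably did, but it is a roadmap, not a proof. You correctly assemble the necessary conditions: primitivity reduces the one-dicritical case to Abhyankar--Moh--Ephraim, so a two-branch polynomial reaching the bound must carry exactly two dicriticals of degree one, each with $n_E>1$, disjoint atypical sets, as recorded in Remark~\ref{obs:bound}. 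From there, however, the step ``going through the finite list of candidate Newton shapes with $\deg f\leq 10$ one checks that no choice of coefficients satisfies the full list of constraints'' is exactly the content to be proved and is left unexecuted; you acknowledge this yourself by deferring degrees $9,10$ to a computer. The heuristic that a polynomial of shape $g(x,y)^{2}+(\text{lower order})$ forces coincident atypical value sets is plausible in examples but is asserted without justification, so it cannot yet do the work you want it to do in pruning the enumeration. Finally, a small imprecision in the $d=1$ check: it is conditions~\ref{c2} and~\ref{c3}, not~\ref{c1}, that force $q$ and $Q$ to be perfect powers; \ref{c1} then ties the root of $q$ to that of $Q$ (giving $a=-3/2$ only once $Q$ is specialised to $(t+1)^{3}$), and the degree and branch count are then read off the Newton polygon exactly as you say. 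The claim about dicritical sections is handled correctly.
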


\begin{ejm}
Both Examples~\ref{ejm-cotaep} and~\ref{ejm:rfr} have only dicritical sections. We have found
also an example of degree~$18$, with two dicriticals (with bamboo), one of them~$E$ with multiplicity~$2$,
hence having also three branches at infinity for the generic fiber and three atypical values.
The fiber corresponding to the value in $A_E^*$ has only two branches at infinity, i.e., 
$\nu_{\infty}^{\text{\rm min}}+1=\nu_\infty^{\text{\rm gen}}$, check the bounds in Page~\pageref{numin}.
\end{ejm}

\section*{Acknowledgements}
First author is partially supported by 
MTM2010-21740-C02-02; the last two authors are partially
supported by the grant 
MTM2010-21740-C02-01. The authors would like to thank Pierrette Cassou-Nogu{\`e}s and  referees for useful remarks and 
valuable comments.

\appendix
\section{Hensel's Lemma}\label{sec:hensel}
In order to be clear which flavor of Hensel's Lemma we are
going to use, we state and prove the following elementary result

Let $\mathbb{K}$ be a field and fix a weight $\omega(x,y):=n x+ m y$
for $n,m\in\mathbb{N}$. Given $0\neq F\in\mathbb{K}[[x,y]]$, we will
consider its decomposition in $\omega$-quasihomogeneous forms
\begin{equation}\label{eq:wdesc}
F(x,y)=F_{a+b}(x,y)+F_{a+b+1}(x,y)+\dots,
\end{equation}
where the subindex means the $\omega$-weight

\begin{lema}[Hensel's Lemma]
Asume that $F_{a+b}(x,y)=f_a(x,y) g_b(x,y)$, $f_a,g_b\in\mathbb{K}[x,y]$
quasihomogeneous and coprime.h
Then, there exist
$$
f,g\in\mathbb{K}[[X,Y]],\quad
f=f_a+f_{a+1}+\dots,\quad 
g=g_b+g_{b+1}+\dots
$$
such that $F=f g$. Moreover if $f_a$ is an irreducible polynomial, then $f$ is an irreducible power series.
\end{lema}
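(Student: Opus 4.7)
The plan is to construct $f$ and $g$ by inductively determining one $\omega$-quasihomogeneous component at a time, and then to derive the irreducibility assertion from the multiplicative behavior of $\omega$-initial forms. Throughout I assume $n,m\geq 1$, so that the $\omega$-weight is strictly positive on $M(R)\setminus\{0\}$; this is the setting in which the lemma is applied in the paper.

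First, I set $f^{(0)}:=f_a$ and $g^{(0)}:=g_b$, so that the hypothesis $F_{a+b}=f_a g_b$ gives $\omega\bigl(F-f^{(0)}g^{(0)}\bigr)>a+b$. Inductively, suppose that polynomials
$$
f^{(N)}=f_a+\sum_{k=1}^{N}f_{a+k},\qquad g^{(N)}=g_b+\sum_{k=1}^{N}g_{b+k}
$$
have been constructed, with each $f_{a+k},g_{b+k}$ $\omega$-quasihomogeneous of the indicated weight and with $\omega\bigl(F-f^{(N)}g^{(N)}\bigr)>a+b+N$. Let $H$ be the $\omega$-quasihomogeneous component of weight $a+b+N+1$ of $F-f^{(N)}g^{(N)}$. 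Expanding the product $f^{(N+1)}g^{(N+1)}$ and collecting only the weight-$(a+b+N+1)$ part (all other contributions beyond $f^{(N)}g^{(N)}$ have strictly larger weight) reduces the induction step to finding quasihomogeneous $Y,X$ of weights $a+N+1,\,b+N+1$ with $f_a\cdot Y+g_b\cdot X=H$.

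The main obstacle is the solvability of this graded Bezout equation. Since $f_a,g_b$ are coprime in the UFD $\mathbb{K}[x,y]$, they form a regular sequence, so the Koszul complex associated with $(f_a,g_b)$ is exact. Taking $\omega$-graded pieces yields, for each weight $w$, an exact sequence of finite-dimensional vector spaces
$$
0\to\mathbb{K}[x,y]_{w-a-b}\to\mathbb{K}[x,y]_{w-a}\oplus\mathbb{K}[x,y]_{w-b}\to\mathbb{K}[x,y]_{w}\to\bigl(\mathbb{K}[x,y]/(f_a,g_b)\bigr)_{w}\to 0,
$$
where the middle map sends $(Y,X)\mapsto f_a Y+g_b X$. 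The complete intersection $\mathbb{K}[x,y]/(f_a,g_b)$ is Artinian Gorenstein with Hilbert polynomial $(1-t^a)(1-t^b)/[(1-t^n)(1-t^m)]$, hence its socle sits in $\omega$-weight $a+b-n-m$; the rightmost term therefore vanishes for every $w\geq a+b-n-m+1$. Since $n,m\geq 1$, each weight $w=a+b+N+1$ with $N\geq 0$ lies in this range, so the equation for $X,Y$ admits a solution. Setting $f^{(N+1)}:=f^{(N)}+Y$ and $g^{(N+1)}:=g^{(N)}+X$ closes the induction, and passing to the $M(R)$-adic limit (which coincides with the $\omega$-adic limit under our assumption on $n,m$) produces the desired $f,g\in\mathbb{K}[[x,y]]$ with $F=fg$.

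For the irreducibility claim, I argue by contradiction: assume $f=\phi\psi$ with $\phi,\psi\in\mathbb{K}[[x,y]]$ both non-units. Then both factors vanish at the origin, so each has a well-defined positive $\omega$-weight, say $\alpha,\beta\geq 1$, with leading $\omega$-forms $\phi_\alpha,\psi_\beta$. Because $\mathbb{K}[x,y]$ is a domain (and hence so is the associated graded of $\mathbb{K}[[x,y]]$ for the $\omega$-filtration), the leading $\omega$-form of a product is the product of the leading $\omega$-forms, giving $\phi_\alpha\psi_\beta=f_a$ with $\alpha+\beta=a$ and $\alpha,\beta\geq 1$. The irreducibility of $f_a$ in $\mathbb{K}[x,y]$ then forces one of $\phi_\alpha,\psi_\beta$ to be a nonzero scalar, contradicting $\alpha,\beta\geq 1$. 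Hence $f$ is irreducible in $\mathbb{K}[[x,y]]$.
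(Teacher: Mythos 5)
Your proof is correct, and it takes a genuinely different route from the paper's for the key step. Both proofs reduce the problem to solving, at each weight $w=a+b+k$, the graded Bezout equation $f_a\,g_{b+k}+g_b\,f_{a+k}=F^*_{a+b+k}$ in quasihomogeneous unknowns. The paper handles this by a hands-on reduction to the \emph{homogeneous} case: it factors quasihomogeneous polynomials as $x^{a_x}y^{a_y}f_{a'}(x^m,y^n)$, splits into cases (one factor a monomial power, or both factors coprime to $xy$), and in the homogeneous situation invokes ``properties of the resultant'' -- a very elementary, case-by-case argument that fits the paper's high-school-algebra spirit. You instead solve the Bezout equation in one stroke by observing that $(f_a,g_b)$ is a regular sequence, taking graded pieces of the Koszul resolution, and noting that the cokernel is the Artinian complete intersection $\mathbb{K}[x,y]/(f_a,g_b)$ whose Hilbert series $(1-t^a)(1-t^b)/[(1-t^n)(1-t^m)]$ vanishes in all weights above $a+b-n-m$; since $n,m\ge 1$, every target weight $a+b+k$ with $k\ge 1$ lies strictly above this, so the map $(Y,X)\mapsto f_aY+g_bX$ is surjective there. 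This is more conceptual and avoids the paper's case analysis entirely, at the modest cost of invoking homological/commutative algebra (Koszul exactness and Hilbert series) that the paper deliberately eschews. (Incidentally, the appeal to Gorenstein-ness is an overkill: the Hilbert-series computation alone locates the top nonzero degree.) Your irreducibility argument -- leading $\omega$-forms multiply because the associated graded of $\mathbb{K}[[x,y]]$ for the $\omega$-filtration is the domain $\mathbb{K}[x,y]$ -- is the standard one and matches what the paper leaves implicit.
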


\begin{proof}
We need to find recursively $\omega$-quasihomogeneous
polynomials $f_{a+k},g_{b+k}$, $k\in\mathbb{N}$ such that
\begin{equation}\label{eq:rec}
f_a(x,y) g_{b+k}(x,y)+g_b(x,y) f_{a+k}(x,y)= F_{a+b+k}^*(x,y)
\end{equation}
where $g_{b+k},f_{a+k}$ are the unknowns and $F_{a+b+k}^*$ is obtained
from $F_{a+b+k}$ and the previous solutions for $k'<k$. 

Let us decompose the above polynomials (where now the subindex correspond now to the homogeneous degree
for the weigh $\omega_0$ defined by $n=m=1$):
$$
\begin{matrix}
f_a(x,y)=&x^{a_x} y^{a_y} f_{a'}(x^m,y^n),&\quad&a=&n a_x+ m a_y+a' m n\\
g_b(x,y)=&x^{b_x} y^{b_y} g_{b'}(x^m,y^n),&\quad&b=&n b_x+ m b_y+b' m n\\
f_{a+k}(x,y)=&x^{c_x} y^{c_y} \tilde{f}_{c}(x^m,y^n),&\quad&a+k=&n c_x+ m c_y+c m n\\
g_{b+k}(x,y)=&x^{d_x} y^{d_y} \tilde{g}_{d}(x^m,y^n),&\quad&b+k=&n d_x+ m d_y+d m n\\
F_{a+b+k}^*(x,y)=&x^{e_x} y^{e_y} \tilde{F}_{e}(x^m,y^n),&\quad&a+b+k=&n e_x+ m e_y+e m n.
\end{matrix}
$$
The decompositions of $a,b,c,d,e$ are unique if we assume that the all indices are non-negative,
the coefficient of $n$ is less than $m$ and the coefficient of $m$ is less than $n$.
We prove it in several steps.

\begin{clm}\label{clm0}
The statement holds for $\omega_0$, i.e., the homogeneous case.
\end{clm}
It is an immediate consequence of the properties of the resultant.

\begin{clm}\label{clm1}
The statement holds if $f_a(x,y)$ is a power of~$x$ or~$y$.
\end{clm}

Assume that $f_a$ is a power of $x$. In this case, we have
\begin{itemize}
\item $a=n (a_x+m a')$, $0\leq a_x<m$.
\item $g_b(0,1)\neq 0$, i.e., $b_x=0$.
\end{itemize}
The following equalities hold:
$$
n(a_x+d_x)+m d_y+(a'+d) m n= n c_x+m (b_y+d_y)+ (b'+c) m n=n e_x+ m e_y+e m n.
$$
We deduce that $e_x=c_x=a_x+d_x-\alpha m$, $e_y=d_y=b_y+d_y-\beta n$,
where $\alpha,\beta\in\{0,1\}$ and
$$
e=a'+d+\alpha=b'+c+\beta.
$$
Equation \eqref{eq:rec} is equivalent to
$$
x^{\alpha+a'} \tilde{g}_{d}(x,y)+y^\beta \tilde{g}_{b'}(x,y) \tilde{f}_{c}(x,y)= \tilde{F}_{e}(x,y),
$$
which follows from Claim~\ref{clm0}, and Claim~\ref{clm1} holds.

\begin{clm}\label{clm2}
The statement holds if both $f_a$ and $g_b$ are coprime with $x,y$.
\end{clm}
In this case $a_x=a_y=b_x=b_y=0$ and
$$
d_x=e_x,\quad d_y=e_y,\quad
a'+d=b'+c=e.
$$
Hence \eqref{eq:rec} is transformed again in its homogeneous version
and Claim~\ref{clm2} follows from again from Claim~\ref{clm0}.
Combining these claims, the statement is proved.
\end{proof}

% \bibliographystyle{amsplain}
% \bibliography{biblio_ea}

\def\cprime{$'$}
\providecommand{\bysame}{\leavevmode\hbox to3em{\hrulefill}\thinspace}
\providecommand{\MR}{\relax\ifhmode\unskip\space\fi MR }
% \MRhref is called by the amsart/book/proc definition of \MR.
\providecommand{\MRhref}[2]{%
  \href{http://www.ams.org/mathscinet-getitem?mr=#1}{#2}
}
\providecommand{\href}[2]{#2}

\end{document}